\newtheorem{theorem}{Theorem}[section]
\newtheorem{lemma}[theorem]{Lemma}
\theoremstyle{definition}
\theoremstyle{remark}
\newtheorem{remark}[theorem]{Remark}
\theoremstyle{algorithm}
\newtheorem{algorithm}[theorem]{Algorithm}
\theoremstyle{corollary}
\newtheorem{corollary}[theorem]{Corollary}
\theoremstyle{proposition}
\newtheorem{proposition}[theorem]{Proposition}
\theoremstyle{example}
\title{On Equivalence and Computational Efficiency of the Major Relaxation Methods for  Minimum Ellipsoid Containing the Intersection of Ellipsoids}
\author{Zhiguo Wang,  Xiaojing Shen\thanks{This work was supported in part by  the open research funds of BACC-STAFDL of China
under Grant No. 2015afdl010, the NSFC No. 61673282 and the PCSIRT16R53. Zhiguo Wang, Xiaojing Shen (corresponding author), and Yunmin Zhu  are  with Department of Mathematics, Sichuan
University, Chengdu, Sichuan 610064, China. E-mail: wangzg315@126.com, shenxj@scu.edu.cn,  ymzhu@scu.edu.cn.} ~and Yunmin Zhu }
\begin{document}
 \maketitle
\begin{abstract}
This paper investigates the problem on the minimum ellipsoid containing the intersection of multiple ellipsoids, which has been extensively applied to information science, target tracking and data fusion etc. There are three major relaxation methods involving SDP relaxation, S-procedure relaxation and bounding ellipsoid relaxation, which are derived by different ideas or viewpoints. However, it is unclear for the interrelationships among these methods. This paper reveals the equivalence among the three relaxation methods by three stages. Firstly, the SDP relaxation method can be equivalently simplified to a decoupled SDP relaxation method. Secondly, the equivalence between the SDP relaxation method and the S-procedure relaxation method can be obtained by rigorous analysis. Thirdly, we establish the equivalence between the decoupled SDP relaxation method and the bounding ellipsoid relaxation method. Therefore, the three relaxation methods are unified through the decoupled SDP relaxation method. By analysis of the computational complexity, the decoupled SDP relaxation method has the least computational burden among the three methods. The above results are helpful for the research of set-membership filter and distributed estimation fusion. Finally, the performance of each method is evaluated by some typical numerical examples in information fusion and filtering.
\end{abstract}

\noindent{\bf keywords:} Semi-definite programming; the intersection of ellipsoids; S-procedure; optimal bounding ellipsoid; distributed estimation fusion.

\section{Introduction}
Problems involving the intersection of ellipsoids occur in many practiced fields such as information science, data fusion, automatic control and target tracking \cite{Boyd-ElGhaoui-Feron-Balakrishnan94,Wang-Li12,Shi-Chen-Lin15,Schweppe68,Shen-Zhu-Song-Luo11}. It is well known that the problems of state and parameter estimation are usually solved by a stochastic method with the noise assumed random \cite{Sun-Tian-Lin17}. However, the information of precise system or the distribution of noise may not be obtained. Thus, uncertain systems are alternatively considered in \cite{Polyak-Nazin-Durieu-Walter04,Kwon-Lee-Park11}, such as biased dynamic systems or unknown noise distributions \cite{Aubry-Boutayeb-Darouach08}. Since only requiring system bias and noise bounds are much easier than a precise system, it is sometimes more likely to assume that only bounds on the possible amplitude of the perturbations. Moreover, it is natural to use an ellipsoid to describe the uncertain set containing the all possible value of the unknown state vector or parameter \cite{Schweppe68}.
Assume each estimated ellipsoid guarantees to contain the true state, then the intersection of these ellipsoids can also contain it. In fact, we want to find a minimum ellipsoid to contain the intersection of these ellipsoids.

The ellipsoidal bounding technique in the problem of state estimation with norm-bounded disturbance has attracted the attention of many researchers \cite{Joachim-Deller06}, \cite{Durieu-Walter-Polyak01}, \cite{Calafiore-ElGhaoui04}. For example, in \cite{Durieu-Walter-Polyak01} the authors consider two measures of the size of a minimum ellipsoid involving the approximation of the intersection of ellipsoids. In \cite{Ros-Sabater-Thomas02}, the authors also provide a suboptimal solution to the problem of finding the ellipsoid with minimum volume containing the intersection of the two ellipsoids defining the convex combination. In distributed fusion setting \cite{Duan-Li11,Yang-Liang-Pan-Qin-Yang16,Yuan-Wang-Guo17,Guo-Chen-Leung-Liu10}, when the cross-correlation of local sensor estimation errors is unknown or impractical, the covariance intersection (CI) algorithm are derived to deal with this problem in \cite{Uhlmann96,Julier-Uhlmann09,Cong-Li-Qi-Sheng16}. It provides not only a fused estimate point but also estimate covariance, the results are parameterized as convex combination of the local estimates.

For which the data is not specified exactly and only known to belong to a given uncertainty set, the authors in \cite{Ben-Tal-Nemirovski98} lay the foundation of robust convex optimization. It is argued that the more reasonable types of uncertainty sets are ellipsoids and intersection of finitely many ellipsoids. When we want to analyze the level of conservativeness of convex approximations to robust counterparts of semi-definite programming, the problem of ellipsoidal approximation of the intersection of ellipsoids is very useful \cite{Henrion-Tarbouriech-Arzelier01}. Although the problem of finding an optimal ellipsoid containing the intersection of ellipsoids exits in many engineering fields, unfortunately, so far, it has not been received enough attention.

In fact, it is a nonconvex optimization problem for determining the minimum ellipsoid containing the intersection of several ellipsoids, since we need to maximize a convex function over a convex set. In \cite{Ben-Tal-Nemirovski98}, the authors  demonstrate that a robust optimization problem with uncertainty set such as the intersection of the ellipsoids 
is  a semi-infinite optimization problem, which leads to computationally intractable robust counterpart.  The problem of determining a minimum ellipsoid to contain the intersection of the ellipsoids also involves solving nonconvex  quadratically constrained quadratic programs (QCQP) \cite{Palomar-Eldar10}, it captures many problems that are of interest to the signal processing, for instance, Boolean quadratic program (BQP). The BQP is long-known to be a computationally difficult problem, and it belongs to the class of NP-hard problems  \cite{Luo-Ma-So-Ye-Zhang10}. Moreover, the studied problem can be viewed as a special instance of calculating the minimum ellipsoid that covers a bounded convex set. In \cite{Boyd-Vandenberghe04} the authors formulate the problem of finding the minimum volume ellipsoid that covers a bounded convex set as convex programming problems, but they are tractable only in special cases, such as finite set, polyhedron, and union of ellipsoids. Therefore, we do not expect to solve the problem of determining the minimum ellipsoid containing the intersection of ellipsoids in polynomial time, or find the global solution of this problem.

There are many authors study different methods for the problems involving the intersection of ellipsoids, and they may provide many approximate solutions by different ideas or viewpoints. In \cite{Boyd-ElGhaoui-Feron-Balakrishnan94}, the authors use S-procedure to approximate the
original problem to a semi-definite programming (SDP), then we can use the efficient interior point methods \cite{Nesterov-Nemirovski94} to obtain the solution.
 Since the ellipsoidal approximation of the intersection of the ellipsoids involves solving nonconvex QCQP, then the semi-definite relaxation (SDR) technique \cite{Luo-Ma-So-Ye-Zhang10} can be used. If these intersecting ellipsoids have a common center, the problem of quadratic form maximization over the intersection of these ellipsoids is studied in \cite{Nemirovski-Roos-Terlaky99}, and it also derives a new approximation bound based on the SDR technique. In \cite{Durieu-Walter-Polyak01} and \cite{Ros-Sabater-Thomas02}, the authors use the bounding ellipsoid relaxation method to contain the intersection of ellipsoids. Although these relaxation methods can obtain some ellipsoids to contain the intersection, it is unclear for the interrelationships among these methods and insight to the pros and cons of these methods.
In \cite{Henrion-Tarbouriech-Arzelier01}, the authors survey various linear matrix inequality relaxation techniques for evaluating the maximum norm vector within the intersection of several ellipsoids, however, which is different from the problem of  minimum ellipsoid containing the intersection of multiple ellipsoids. In fact,  the described problem in  \cite{Henrion-Tarbouriech-Arzelier01}  is similar to the Chebyshev center of a convex set \cite{Eldar-Beck-Teboulle08}.

Motivated by the aforementioned analysis, the goal of this paper is to study the problem of determining a minimum ellipsoid containing the intersection of ellipsoids, and establish the equivalence of the three major relaxation methods involving SDP relaxation, S-procedure relaxation and bounding ellipsoid relaxation, meanwhile, we also analyze the computational efficiency of each method. Our contribution has three aspects:
\begin{itemize}
  \item Firstly, we present a comprehensive overview of some common relaxation techniques to deal with the problem of calculating the minimum ellipsoid containing the intersection of multiple ellipsoids, and the pros and cons among these relaxation methods are discussed.
  \item Secondly, we establish the equivalence among these relaxation methods though the decoupled SDP relaxation method, which has the least computational complexity among the three relaxation methods. The equivalence result is helpful for the research of set-membership filter and distributed estimation fusion.
  \item Thirdly, we derive an analytic expression of the shape matrix of the minimum ellipsoid by the decoupled SDP relaxation method. It is similar to that based on the covariance intersection method. However, we show that this minimum ellipsoid based on the decoupled SDP relaxation method is tighter than that based on the covariance intersection method.
\end{itemize}

The structure of the paper is as follows. In Section \ref{sec_2}, we describe the optimization problem on the minimum ellipsoid containing the intersection of multiple ellipsoids.  Section \ref{sec_3} presents the SDP relaxation method and the decoupled SDP relaxation method. Section \ref{sec_4} provides the S-procedure relaxation method. The equivalence between the SDP relaxation and S-procedure is established. Section \ref{sec_5} presents the bounding ellipsoid relaxation method. The equivalence between the decoupled SDP relaxation method and the bounding ellipsoid relaxation method is proved.  Simulations are given in Section \ref{sec_6}. Several conclusions and potential directions for further research are drawn in Section \ref{sec_7}.

\section{Problem Statement}\label{sec_2}
In many practical fields, we often need to deal with the problem of determining the minimum ellipsoid containing the intersection of multiple ellipsoids. For example, the measurement update step of set-membership filter always involves the intersection of predicted ellipsoid and measurement ellipsoid \cite{Schweppe68}. In the multisensor estimation fusion setting, each sensor sends the local estimated ellipsoid to the fusion center and cross-correlation is unknown, then we hope to derive an optimal ellipsoid to contain the intersection of local ellipsoids \cite{Noack-Sijs-Reinhardt-Hanebeck17}, which can improve the accuracy of estimation.


In order to derive some highly effective algorithms to solve this important problem, we describe it as the following optimization problem.
\begin{eqnarray}
\label{Eqpre_1} &&\min_{P_0,x_0}~~ f(P_0) \\
\label{Eqpre_2} &&\mbox{s.t.}~~ (x-x_0)^TP_0^{-1}(x-x_0)\leq1,\qquad\forall x\in\mathscr{F},
\end{eqnarray}
 where the vector $x_0\in \mathcal{R}^n$ and the symmetric positive-definite matrix $P_0\in\mathcal{R}^{n\times n}$ are the decision variables, and they are also the center and the shape matrix of the optimal ellipsoid $\mathcal{E}_0$, respectively. The set $\mathscr{F}$ denotes the intersection of $m$ ellipsoids, which is defined as follows
\begin{eqnarray}
\label{Eqpre_45}\mathscr{F}&=&\bigcap_{i=1}^m\mathcal{E}_i,\\
\label{Eqpre_46} \mathcal{E}_i&=&\{x\in \mathcal{R}^n:(x-x_i)^T P_i^{-1}(x-x_i)\leq1\},~i=1,\ldots,m,
\end{eqnarray}
where $x_i$ and $P_i$ are the known center and the shape matrix of ellipsoid $\mathcal{E}_i$, respectively, and $P_i$ is a symmetric positive-definite matrix, $i=1,\ldots,m$. The objective function $f(P_0)$ is the ``size" of the optimized ellipsoid, which is a convex function and aimed at minimizing the shape matrix $P_0$. The common ``size" of the ellipsoid is $trace(P_0)$ or $logdet(P_0)$, which means the sum of squares of semiaxes lengths or the volume of the ellipsoid $\mathcal{E}_0$, respectively. For the two criterions, in \cite{Durieu-Walter-Polyak01}, the authors have proved this optimal ellipsoid exists and is unique. Actually, the constraint (\ref{Eqpre_2}) has infinite inequalities, which enforces that the intersection of these ellipsoids is contained in the optimized ellipsoid. Thus, the optimization problem (\ref{Eqpre_1})-(\ref{Eqpre_2}) is also called semi-infinite optimization problem \cite{Boyd-Vandenberghe04}.

Some authors would like to describe a general ellipsoid $\mathcal{E}_i$ as
\begin{eqnarray}
\label{Eqpre_49} \mathcal{E}_i&=&\{x:||A_ix-b_i||\leq1,~i=0,1,\ldots,m\},
\end{eqnarray}
i.e., the inverse image of the Euclidean unit ball under an affine mapping, where $A_i^TA_i=P_i^{-1}$, $b_i=A_ix_i$. Then the optimization problem (\ref{Eqpre_1})-(\ref{Eqpre_2}) is equivalent to the following problem in \cite{Boyd-Vandenberghe04},
\begin{eqnarray}
\label{Eqpre_47} &&\min_{A_0,b_0}~~ f(A_0^{-1}) \\
\label{Eqpre_48} &&~~\mbox{s.t.}~~ ||A_0x-b_0||\leq1,\qquad\forall x\in \mathscr{F},
\end{eqnarray}
where the optimization variables are the positive-definite matrix $A_0$ and the vector $b_0$.

In fact, an alternative description of the feasible set in (\ref{Eqpre_2}) is given by
\begin{eqnarray}
\label{Eqpre_3} \varphi(x_0,P_0)\leq0
\end{eqnarray}
with the function
\begin{eqnarray}
\label{Eqpre_4}\varphi(x_0,P_0)\triangleq \max_{x\in\mathscr{F}}~~(x-x_0)^TP_0^{-1}(x-x_0)-1.
\end{eqnarray}
Obtaining the optimal value $\varphi(x_0,P_0)$ is very hard, since the objective function in (\ref{Eqpre_4}) is a convex in $x$, when $P_0$ is a positive definite matrix. In other words, it needs to calculate the maximum of a convex function, which is a non-convex optimization problem \cite{Horst-Tuy96}. Therefore, the computational complexity of the optimization problem  (\ref{Eqpre_1})-(\ref{Eqpre_2}) is NP-hard. Meanwhile, simply verifying that $\mathcal{E}_0\supset\bigcap_{i=1}^m\mathcal{E}_i$ holds, given $\mathcal{E}_1,\ldots,\mathcal{E}_m$, is NP-complete \cite{Boyd-ElGhaoui-Feron-Balakrishnan94}.

In next sections, we do not expect to solve the problem (\ref{Eqpre_1})-(\ref{Eqpre_2}) to obtain the optimized ellipsoid in polynomial time, but the efficient relaxation methods can be provided for approximation solutions. We concentrate on deriving the equivalence of typical relaxation methods and their interrelationships. Meanwhile, we also analyze the computational complexity of each method.
\section{SDP Relaxation}\label{sec_3}
In this section, the optimization problem (\ref{Eqpre_1})-(\ref{Eqpre_2}) can be relaxed to an SDP problem by extending the relaxed Chebyshev center algorithm \cite{Eldar-Beck-Teboulle08}. Moreover, we derive an analytic expression of the shape matrix of the minimum ellipsoid based on a decoupled technique, and the SDP problem can be simplified to another convex problem, which has lower computational complexity.

\subsection{Convex Relaxation of $X=xx^T$}
Denoting $X=xx^T$, (\ref{Eqpre_4}) can be written equivalently as
\begin{eqnarray}
\label{Eqpre_7}&&\max~~f_0(X,x)\\
\label{Eqpre_8}&&~~\mbox{s.t.}~~~(X,x)\in\mathscr{F},
\end{eqnarray}
where
\begin{eqnarray}
\label{Eqpre_9}\mathscr{F}=\{(X,x):~f_i(X,x)\leq 0,~1\leq i\leq m,~X=xx^T\},
\end{eqnarray}
and
\begin{eqnarray}
\label{Eqpre_10}f_i(X,x)=tr(P_i^{-1}X)-2x_i^TP_i^{-1}x+x_i^TP_i^{-1}x_i-1,~0\leq i \leq m.
\end{eqnarray}
The objective function (\ref{Eqpre_7}) is linear in decision variable $(X,x)$, but the set $\mathscr{F}$ is not convex due to $X=xx^T$. In order to obtain a relaxation of the optimization problem (\ref{Eqpre_7})-(\ref{Eqpre_8}), usually $\mathscr{F}$ is replaced by the following convex set
\begin{eqnarray}
\label{Eqpre_11}\mathscr{T}=\{(X,x):~f_i(X,x)\leq0,~1\leq i\leq m,~xx^T\preceq X\}.
\end{eqnarray}

Based on the relaxed convex set $\mathscr{T}$, we can obtain a suboptimal ellipsoid to contain the intersection $\mathscr{F}$ by the SDP relaxation method as follows.
\begin{lemma}\label{the_1} (SDP relaxation)  \cite{Boyd-Vandenberghe04}:
The optimization problem (\ref{Eqpre_1})-(\ref{Eqpre_2}) can be relaxed to an SDP problem as follows
\begin{eqnarray}
\label{Eqpre_28} &&\min~~ f(P_0) \\
\label{Eqpre_19}&&\mbox{s.t.}~\lambda_i\geq0,i=1,\ldots,m,\qquad\qquad\qquad\qquad\qquad\qquad\\
\label{Eqpre_29}
&&\left[
                  \begin{array}{ccc}
                    P_0^{-1}-\sum_{i=1}^m\lambda_iP_i^{-1} & -\tilde{x}_0+\sum_{i=1}^m\lambda_iP_i^{-1}x_i & 0 \\
                    (-\tilde{x}_0+\sum_{i=1}^m\lambda_iP_i^{-1}x_i)^T & -1-\sum_{i=1}^m\lambda_i(x_i^TP_i^{-1}x_i-1)& \tilde{x}_0^T \\
                    0 & \tilde{x}_0 & -P_0^{-1} \\
                  \end{array}
                \right]\\
 \nonumber&&\qquad \qquad\qquad\qquad \qquad\qquad\qquad\qquad\qquad \qquad\qquad\qquad\qquad            \preceq0,
\end{eqnarray}
where $\tilde{x}_0=P_0^{-1}x_0$. Moreover, the relaxed optimization problem (\ref{Eqpre_28})-(\ref{Eqpre_29}) is convex in the variables $P_0^{-1}$, $\tilde{x}_0$, $\lambda_1,\ldots,\lambda_m$.
\end{lemma}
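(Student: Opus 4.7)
The plan is to derive the LMI by combining three ingredients: a semi-infinite reformulation of the containment constraint, a Lagrangian/S-procedure relaxation that collapses the uncountable family of inequalities into a single matrix inequality, and a Schur-complement trick that linearizes the bilinear term $x_0^T P_0^{-1} x_0$ in the natural variables $(P_0^{-1}, \tilde{x}_0, \lambda_1, \ldots, \lambda_m)$ with $\tilde{x}_0 = P_0^{-1} x_0$.

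First, I would rewrite the semi-infinite constraint (\ref{Eqpre_2}) as $\varphi(x_0,P_0)\le0$ with $\varphi$ as in (\ref{Eqpre_4}), lift $x$ to $(X,x)$ with $X=xx^T$, and notice that the objective and the membership constraints become the affine functions $f_0,\ldots,f_m$ of (\ref{Eqpre_10}); these are quadratic forms in $[x^T,1]^T$. The implication ``$f_i(xx^T,x)\le 0$ for $i=1,\ldots,m$ $\Rightarrow$ $f_0(xx^T,x)\le 0$'' is guaranteed (sufficient direction of the S-procedure, equivalently Lagrangian weak duality) by the existence of multipliers $\lambda_i\ge 0$ with $f_0(X,x)-\sum_{i=1}^m\lambda_i f_i(X,x)\le 0$ on the whole set $\{X\succeq xx^T\}$, which is exactly the SDR of $\mathscr{F}$ replaced by $\mathscr{T}$ in (\ref{Eqpre_11}). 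Writing this pointwise bound as a quadratic form $[x^T,1]M[x^T,1]^T\le 0$ in the original (not relaxed) $X=xx^T$ shows it is equivalent to the $(n+1)\times(n+1)$ LMI $M\preceq 0$, where
\begin{equation*}
M=\begin{pmatrix} P_0^{-1}-\sum_{i=1}^m\lambda_i P_i^{-1} & -\tilde{x}_0+\sum_{i=1}^m\lambda_i P_i^{-1}x_i \\[2pt] \bigl(-\tilde{x}_0+\sum_{i=1}^m\lambda_i P_i^{-1}x_i\bigr)^T & \tilde{x}_0^T P_0\tilde{x}_0-1-\sum_{i=1}^m\lambda_i(x_i^T P_i^{-1}x_i-1) \end{pmatrix}.
\end{equation*}

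Second, the bottom-right entry of $M$ contains the nonlinear term $\tilde{x}_0^T P_0\tilde{x}_0$, so $M\preceq 0$ is not yet an LMI in our decision variables. I would remove this term by a Schur-complement expansion: writing $M=M_0+\bigl(\begin{smallmatrix}0\\ \tilde{x}_0^T\end{smallmatrix}\bigr)P_0\bigl(\begin{smallmatrix}0 & \tilde{x}_0\end{smallmatrix}\bigr)$ where $M_0$ is the matrix obtained from $M$ by replacing $\tilde{x}_0^T P_0\tilde{x}_0$ with $0$, and invoking the Schur complement (with the negative-definite block $-P_0^{-1}$) one obtains the equivalence
\begin{equation*}
M\preceq 0 \;\Longleftrightarrow\; \begin{pmatrix} M_0 & \begin{smallmatrix}0\\ \tilde{x}_0^T\end{smallmatrix} \\[2pt] \begin{smallmatrix}0 & \tilde{x}_0\end{smallmatrix} & -P_0^{-1} \end{pmatrix}\preceq 0,
\end{equation*}
which is precisely the $(n+2)\times(n+2)$ LMI (\ref{Eqpre_29}) stated in the lemma.

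Finally, convexity is immediate from the final form: every entry of the displayed block matrix is affine in $(P_0^{-1},\tilde{x}_0,\lambda_1,\ldots,\lambda_m)$, the constraints $\lambda_i\ge 0$ are affine, and the standard choices $f(P_0)=\trace(P_0)$ and $f(P_0)=\log\det P_0$ are convex in $P_0^{-1}$ (the first is matrix-convex in the inverse, the second equals $-\log\det P_0^{-1}$), so the relaxed program is a convex SDP. The main obstacle I expect is the Schur-complement step: one has to pick the right sign convention so that the Schur complement of $-P_0^{-1}$ regenerates a $+\tilde{x}_0^T P_0\tilde{x}_0$ contribution (rather than its negative) and has to verify that the off-diagonal placement $(0,\tilde{x}_0^T,0)^T$ of the augmenting column is correct; once these bookkeeping details are in place, the identification with (\ref{Eqpre_29}) is a matter of matching entries.
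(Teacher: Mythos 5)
Your proof is correct and follows essentially the same route as the paper's: nonnegative multipliers $\lambda_i$ certifying that $f_0-\sum_{i=1}^m\lambda_i f_i\le 0$ as a single quadratic form in $[x^T\;1]^T$ (weak duality, i.e.\ the sufficient half of the S-procedure), followed by a Schur complement against the block $-P_0^{-1}$ that moves the bilinear entry $x_0^TP_0^{-1}x_0=\tilde{x}_0^TP_0\tilde{x}_0$ into the third block row and column of (\ref{Eqpre_29}), with exactly the sign check $-B(-P_0^{-1})^{-1}B^T=+BP_0B^T$ you flag. The only difference is presentational: the paper first computes the dual function $g(\lambda_1,\ldots,\lambda_m,G)$ explicitly, with a Moore--Penrose pseudoinverse term, and then recovers by a preliminary Schur complement the same $(n+1)\times(n+1)$ matrix $M$ that you write down directly, so your version is slightly leaner and additionally supplies the convexity argument for the trace and logdet objectives that the paper's appendix omits.
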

\begin{proof}\label{pro_1} 
In order to see clearly the equivalence of these relaxation methods, we present a proof for our specific formulation
of Lemma \ref{the_1} in Appendix.
\end{proof}

 \begin{algorithm}\label{alg_2} SDP relaxation
 \begin{enumerate}
   \item Solve the optimization problem (\ref{Eqpre_28})-(\ref{Eqpre_29}) for the optimal solutions $P_0^{-1}$ and $\tilde{x}_0$.
   \item Compute the shape matrix $P_0$ and the center $x_0=P_0\tilde{x}_0$.
 \end{enumerate}
 \end{algorithm}
As we know, this SDP problem (\ref{Eqpre_28})-(\ref{Eqpre_29}) can be efficiently solved by interior point methods \cite{Vandenberghe-Boyd96}. Meanwhile, we can use the convex optimization toolbox CVX \cite{Grant-Boyd-Ye09} to solve (\ref{Eqpre_28})-(\ref{Eqpre_29})  in MATLAB, and Algorithm \ref{alg_2} gives us a suboptimal ellipsoid that contains the intersection of ellipsoids $\mathcal{E}_1,\ldots,\mathcal{E}_m$.

\subsection{Decoupled SDP relaxation}\label{sec_3_2}
In this subsection, we use a decoupled technique \cite{Calafiore-ElGhaoui04} to make further efforts to improve the complexity of this SDP problem (\ref{Eqpre_28})-(\ref{Eqpre_29}). The specific method can be seen in the following proposition.
\begin{proposition} \label{cor_2}
When the objective function is trace or logdet function, the optimization problem (\ref{Eqpre_28})-(\ref{Eqpre_29}) based on SDP relaxation technique can be equivalently decoupled to
\begin{eqnarray}
\label{Eqpre_50} &&\min~~ f\left(\Big(\sum_{i=1}^m \lambda_iP_i^{-1}\Big)^{-1}\right) \\
 \nonumber&&\mbox{s.t.}~~ \lambda_i\geq0,~i=1,\ldots,m,\\
 \label{Eqpre_51}&& \left[
                                     \begin{array}{cc}
                                       1-\sum_{i=1}^m \lambda_i+\sum_{i=1}^m \lambda_ix_i^TP_i^{-1}x_i&\sum_{i=1}^m \lambda_ix_i^TP_i^{-1} \\
                                      \sum_{i=1}^m \lambda_iP_i^{-1}x_i & \sum_{i=1}^m \lambda_iP_i^{-1} \\
                                     \end{array}
                                   \right]\succeq0,
\end{eqnarray}
where $\lambda_i,~i=1,\ldots,m$ are the optimization variables of this problem.
If the above problem is feasible, then there exists an optimal solution for optimization problem (\ref{Eqpre_28})-(\ref{Eqpre_29}). In this case, calling $\lambda_i^{*}$ optimal values of the problem variable $\lambda_i$, $i=1,\ldots,m$, then the optimal shape matrix and the center of problem (\ref{Eqpre_28})-(\ref{Eqpre_29}) satisfy
\begin{eqnarray}
\label{Eqpre_52} P_0^{-1}&=&\sum_{i=1}^m \lambda_i^{*}P_i^{-1}\\
\label{Eqpre_53} x_0&=&P_0\sum_{i=1}^m \lambda_i^{*}P_i^{-1}x_i.
\end{eqnarray}
\end{proposition}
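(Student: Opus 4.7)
The plan is to reduce the $3 \times 3$ block matrix inequality (\ref{Eqpre_29}) to a $2 \times 2$ block inequality via a Schur complement on its negative-definite lower-right block, and then exploit the monotonicity of $f$ on the positive-definite cone to read off the optimal shape matrix and center as in (\ref{Eqpre_52})-(\ref{Eqpre_53}).

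First I would apply the Schur complement to the block $-P_0^{-1} \prec 0$ in (\ref{Eqpre_29}). Abbreviating $A := P_0^{-1} - \sum_{i=1}^m \lambda_i P_i^{-1}$, $b := -\tilde{x}_0 + \sum_{i=1}^m \lambda_i P_i^{-1} x_i$, and $c := -1 + \sum_{i=1}^m \lambda_i - \sum_{i=1}^m \lambda_i x_i^T P_i^{-1} x_i$, the constraint (\ref{Eqpre_29}) becomes
\[
\begin{bmatrix} A & b \\ b^T & c + \tilde{x}_0^T P_0 \tilde{x}_0 \end{bmatrix} \preceq 0,
\]
whose top-left block forces $P_0^{-1} \preceq \sum_{i=1}^m \lambda_i P_i^{-1}$, i.e.\ $P_0 \succeq \bar{P}_0 := \bigl(\sum_{i=1}^m \lambda_i P_i^{-1}\bigr)^{-1}$. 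Because trace and log-determinant are monotone in the \Lowner\ order, this yields $f(P_0) \geq f(\bar{P}_0)$, with equality at $P_0^{-1} = \sum_{i=1}^m \lambda_i P_i^{-1}$. Substituting this equality together with $\tilde{x}_0 = \bar{\tilde{x}}_0 := \sum_{i=1}^m \lambda_i P_i^{-1} x_i$ makes both $A=0$ and $b=0$, and the surviving scalar inequality $c + \bar{\tilde{x}}_0^T \bar{P}_0 \bar{\tilde{x}}_0 \leq 0$ is, by a further Schur complement in the roles of $\bar{P}_0^{-1}$ and $-c$, precisely the PSD condition (\ref{Eqpre_51}). This delivers one direction: every $\lambda$ feasible for (\ref{Eqpre_51}) yields an SDP-feasible triple through (\ref{Eqpre_52})-(\ref{Eqpre_53}) with matching objective value.

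The main obstacle is the converse, which is needed to rule out that the coupled SDP admits feasible $\lambda$'s that the decoupled formulation misses. Given any SDP-feasible $(P_0, \tilde{x}_0, \lambda)$ with $A \prec 0$ (the boundary case follows by a perturbation/limit argument), set $H := (-A)^{-1} \succ 0$; Schur-complementing once more in the $2 \times 2$ block yields
\[
c + \tilde{x}_0^T P_0 \tilde{x}_0 + (\bar{\tilde{x}}_0 - \tilde{x}_0)^T H (\bar{\tilde{x}}_0 - \tilde{x}_0) \leq 0.
\]
I would finish by the infimal convolution identity
\[
\min_{q} \bigl[ q^T P_0 q + (\bar{\tilde{x}}_0 - q)^T H (\bar{\tilde{x}}_0 - q) \bigr] = \bar{\tilde{x}}_0^T (P_0^{-1} + H^{-1})^{-1} \bar{\tilde{x}}_0 = \bar{\tilde{x}}_0^T \bar{P}_0 \bar{\tilde{x}}_0,
\]
since by construction $P_0^{-1} + H^{-1} = \sum_{i=1}^m \lambda_i P_i^{-1} = \bar{P}_0^{-1}$. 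Bounding the displayed inequality below by this minimum gives $c + \bar{\tilde{x}}_0^T \bar{P}_0 \bar{\tilde{x}}_0 \leq 0$, which is the Schur complement form of (\ref{Eqpre_51}). Combining both directions, the two problems share the same feasible $\lambda$-set and the same optimal value, and the optimal SDP shape and center are recovered from $\lambda^*$ by (\ref{Eqpre_52})-(\ref{Eqpre_53}).
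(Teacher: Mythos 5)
Your proof is correct and follows the same overall route as the paper's: a Schur complement on the $-P_0^{-1}$ block collapses (\ref{Eqpre_29}) to the $2\times2$ condition, the L\"{o}wner-monotonicity of trace and $\log\det$ forces $P_0^{-1}=\sum_i\lambda_iP_i^{-1}$ and $\tilde{x}_0=\sum_i\lambda_iP_i^{-1}x_i$ at the optimum, and a final Schur complement turns the surviving scalar inequality into (\ref{Eqpre_51}). Where you genuinely add something is the converse direction: the paper disposes of ``(\ref{Eqpre_55}) feasible $\Rightarrow$ (\ref{Eqpre_51}) feasible'' by citing the decoupling technique of Calafiore and El~Ghaoui and asserting that the minimum of $f(P_0)$ is attained at the decoupled point, which silently presupposes that this point is still feasible. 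Your infimal-convolution (parallel-sum) identity, using $P_0^{-1}+H^{-1}=\sum_i\lambda_iP_i^{-1}$ to lower-bound $\tilde{x}_0^TP_0\tilde{x}_0+(\bar{\tilde{x}}_0-\tilde{x}_0)^TH(\bar{\tilde{x}}_0-\tilde{x}_0)$ by $\bar{\tilde{x}}_0^T\bar{P}_0\bar{\tilde{x}}_0$, proves exactly that missing feasibility claim and makes the equivalence of the $\lambda$-feasible sets self-contained; the only loose end is the singular case $A\not\prec0$, which you correctly flag, and which can be closed either by your limit argument or by the pseudo-inverse form of the Schur complement (Lemma \ref{lem_2}) together with its range condition.
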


\begin{proof}
Using Schur complement, (\ref{Eqpre_29}) is feasible if and only if
\begin{eqnarray}
\label{Eqpre_55}\left[
  \begin{array}{cc}
   \sum_{i=1}^m\lambda_iP_i^{-1}- P_0^{-1} & \tilde{x}_0-\sum_{i=1}^m\lambda_iP_i^{-1}x_i \\
    (\tilde{x}_0-\sum_{i=1}^m\lambda_iP_i^{-1}x_i)^T & 1+\sum_{i=1}^m\lambda_i(x_i^TP_i^{-1}x_i-1)-\tilde{x}_0^TP_0\tilde{x}_0 \\
  \end{array}
\right]\succeq0.
\end{eqnarray}

where $\tilde{x}_0=P_0^{-1}x_0$. Thus, we only need to prove that (\ref{Eqpre_55}) is feasible if and only if (\ref{Eqpre_51}) is feasible.
It is given from the following two aspects:

 $``\Longleftarrow"$
Based on the definition of the shape matrix $P_0$ and the center $x_0$ in (\ref{Eqpre_52})-(\ref{Eqpre_53}),
\begin{eqnarray}
\nonumber P_0^{-1}&=&\sum_{i=1}^m \lambda_iP_i^{-1}\\
\nonumber  x_0&=&P_0\sum_{i=1}^m \lambda_iP_i^{-1}x_i.
\end{eqnarray}
If the optimal values $\lambda_i$, $i=1,\ldots,m$, satisfy the constraint (\ref{Eqpre_51}), then the optimal variables $P_0, x_0,\lambda_i$, $i=1,\ldots,m$ are also feasible for the constraint (\ref{Eqpre_55}) by Schur complement.
That is, that (\ref{Eqpre_51}) is feasible implies (\ref{Eqpre_55}) is feasible.

$``\Longrightarrow"$ Based on Schur complement, (\ref{Eqpre_55}) is equivalent to
\begin{eqnarray}
\nonumber &&\sum_{i=1}^m\lambda_iP_i^{-1}-P_0^{-1}-\Big(\tilde{x}_0-\sum_{i=1}^m\lambda_iP_i^{-1}x_i\Big)\\
 \nonumber &&\cdot\Big(1+\sum_{i=1}^m\lambda_i(x_i^TP_i^{-1}x_i-1)-\tilde{x}_0^TP_0\tilde{x}_0\Big)^{-1}\\
 \label{Eqpre_56}&&\cdot\Big(\tilde{x}_0-\sum_{i=1}^m\lambda_iP_i^{-1}x_i\Big)^T
  \succeq0.
\end{eqnarray}
Both in the case of trace and logdet function, $f(X_1)\geq f(X_2)$ whenever $X_1\succeq X_2$. Then, according to  the decoupled technique in \cite{Calafiore-ElGhaoui04}, the minimum of $f(P_0)$ is achieved, when
$P_0=\Big(\sum_{i=1}^m \lambda_iP_i^{-1}\Big)^{-1}$ and $\tilde{x}_0=\sum_{i=1}^m\lambda_iP_i^{-1}x_i$. Thus, (\ref{Eqpre_55}) is feasible implies (\ref{Eqpre_51}) is feasible.

All in all, the problem (\ref{Eqpre_50})-(\ref{Eqpre_51}) is equivalent to the optimization problem (\ref{Eqpre_28})-(\ref{Eqpre_29}).
\end{proof}

  \begin{algorithm}\label{alg_3} Decoupled SDP relaxation
 \begin{enumerate}
   \item Solve the optimization problem (\ref{Eqpre_50})-(\ref{Eqpre_51}) for the optimal solutions $\lambda_i$, $i=1,\ldots,m$.
   \item Compute the shape matrix $P_0$ and the center $x_0$ by (\ref{Eqpre_52})-(\ref{Eqpre_53}).
 \end{enumerate}
 \end{algorithm}

\begin{remark}\label{rem_1}~
\begin{itemize}
  \item  Proposition \ref{cor_2} shows that the shape matrix $P_0$ and the center $x_0$ (\ref{Eqpre_52})-(\ref{Eqpre_53}) of the minimum ellipsoid derived by the decoupled relaxation method are the weighted combination of those of multiple ellipsoids. The optimization problem (\ref{Eqpre_50})-(\ref{Eqpre_53}) is also a significant bridge for deriving the equivalence among three typical relaxation methods in next section.
  \item  From Proposition \ref{cor_2}, we can calculate a suboptimal ellipsoid to contain the intersection $\mathscr{F}$ by (\ref{Eqpre_50})-(\ref{Eqpre_53}), which is described by Algorithm \ref{alg_3}.
       Comparing the optimization problem (\ref{Eqpre_50})-(\ref{Eqpre_51}) with (\ref{Eqpre_28})-(\ref{Eqpre_29}), it is easy to see that the dimension of the constraint matrix (\ref{Eqpre_51}) of Algorithm \ref{alg_3} is $n+1$, which is smaller than that of matrix (\ref{Eqpre_29}). Meanwhile, the number of the decision variables of the optimization problem (\ref{Eqpre_50})-(\ref{Eqpre_51}) is $m$, but that of the optimization problem (\ref{Eqpre_28})-(\ref{Eqpre_29}) is $m+\frac{n(n+1)}{2}+n$. Therefore, if we use Proposition \ref{cor_2} to calculate the ellipsoid containing the intersection of the ellipsoids, the computing time may be reduced much more, which can be clearly seen in Table \ref{tab_1} in the numerical section.
\end{itemize}

\end{remark}
\section{S-Procedure Relaxation} \label{sec_4}
In system and control theory, one often encounters the constraint that a quadratic form is negative when other quadratic forms are all negative. In some cases, this constraint can be relaxed as a linear matrix inequality (LMI) based on S-procedure method. Actually, the ellipsoids can be written as quadratic forms in (\ref{Eqpre_49}), therefore, \cite{Boyd-ElGhaoui-Feron-Balakrishnan94} uses the S-procedure relaxation technique to deal with the constrain condition (\ref{Eqpre_2}).
\subsection{The Equivalence Between S-Procedure Relaxation and SDP Relaxation}
\begin{lemma}\label{lem_1} (S-procedure)\cite{Yakubovich71}:
Let $F_0(\xi), F_1(\xi),\ldots, F_m(\xi)$, be quadratic functions in variable $\xi\in\mathcal {R}^{n}$
\begin{eqnarray}
F_i(\xi)=\xi^TT_i\xi, ~~i=0,\ldots, m
\end{eqnarray}
with $T_i=T_i^T$. Then the implication
\begin{eqnarray}
F_1(\xi)\leq0,\ldots,F_m(\xi)\leq0\Rightarrow F_0(\xi)\leq0
\end{eqnarray}
holds if there exist $\tau_1,\ldots,\tau_m\geq0$ such that
\begin{eqnarray}
F_0(\xi)-\sum_{i=1}^m\tau_iF_i(\xi)\leq0, ~ for~ all ~\xi,
\end{eqnarray}
or
\begin{eqnarray}
T_0-\sum_{i=1}^m\tau_iT_i\preceq0.
\end{eqnarray}
It is a nontrivial fact that when $m= 1$, the converse holds \cite{Boyd-ElGhaoui-Feron-Balakrishnan94}.
\end{lemma}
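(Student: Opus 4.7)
The plan is to prove the sufficiency direction that the lemma actually asserts by a direct pointwise argument, and then separately to sketch how the nontrivial converse at $m=1$ (flagged as a remark) fits into the classical S-lemma framework.

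First I would handle the sufficiency. Given any $\xi\in\mathcal{R}^n$ with $F_i(\xi)\leq 0$ for $i=1,\ldots,m$, nonnegativity of each $\tau_i$ gives $\sum_{i=1}^m\tau_iF_i(\xi)\leq 0$, so the hypothesis $F_0(\xi)-\sum_{i=1}^m\tau_iF_i(\xi)\leq 0$ immediately yields $F_0(\xi)\leq\sum_{i=1}^m\tau_iF_i(\xi)\leq 0$, which is the claimed implication. The equivalence between this inequality holding for every $\xi$ and the matrix inequality $T_0-\sum_{i=1}^m\tau_iT_i\preceq 0$ is the standard characterization of negative semidefiniteness applied to the symmetric matrix $T_0-\sum_i\tau_iT_i$ via the quadratic form $\xi^T(T_0-\sum_i\tau_iT_i)\xi$. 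This completes the proof of what is written in the lemma.

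For the parenthetical converse when $m=1$, the route I would take is Dines' theorem. Under a Slater-type hypothesis that some $\xi_0$ satisfies $F_1(\xi_0)<0$, the joint range $\{(F_0(\xi),F_1(\xi)):\xi\in\mathcal{R}^n\}$ is a convex subset of $\mathcal{R}^2$. The assumption $F_1(\xi)\leq 0\Rightarrow F_0(\xi)\leq 0$ says this convex set avoids the open region $\{(u,v):u>0,\,v\leq 0\}$, so a separating hyperplane gives multipliers $(\alpha,\beta)$ with $\alpha F_0(\xi)\leq \beta F_1(\xi)$ for all $\xi$; Slater then forces $\alpha>0$ and $\beta\geq 0$, and setting $\tau=\beta/\alpha\geq 0$ yields $T_0-\tau T_1\preceq 0$.

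The main obstacle lies squarely in the converse step, since Dines' convexity theorem is a genuinely nontrivial structural fact about pairs of quadratic forms and the degenerate cases (no Slater point, or $\alpha=0$ after separation) require careful handling. For the purposes of this paper, however, only the sufficient direction is needed: the next subsection uses it to relax the semi-infinite constraint $(\ref{Eqpre_2})$ into a finite LMI by introducing nonnegative multipliers $\tau_i$, and the remark on the converse merely reassures the reader that, in the single-ellipsoid case, this relaxation is lossless.
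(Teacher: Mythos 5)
Your proposal is correct, but note that the paper itself supplies no proof of this lemma at all: it is stated as a known result, with the sufficiency direction attributed to Yakubovich and the $m=1$ converse to Boyd et al., so there is no in-paper argument to compare against. Your pointwise proof of the stated (sufficiency) direction is exactly the standard one and is complete: nonnegativity of the $\tau_i$ together with $F_i(\xi)\leq 0$ gives $\sum_i\tau_iF_i(\xi)\leq 0$, hence $F_0(\xi)\leq\sum_i\tau_iF_i(\xi)\leq 0$, and the passage between the functional inequality and $T_0-\sum_i\tau_iT_i\preceq 0$ is just the definition of negative semidefiniteness for the symmetric matrix $T_0-\sum_i\tau_iT_i$. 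Your sketch of the $m=1$ converse via Dines' convexity theorem and a separating hyperplane is the classical route, and you are right to flag that a Slater-type condition ($F_1(\xi_0)<0$ for some $\xi_0$) is needed there --- a regularity hypothesis the paper's one-line remark omits. Since the paper only ever invokes the sufficiency direction (to relax the containment constraint into the LMI (\ref{Eqpre_27})), your decision to give a full proof only of that direction and treat the converse as a cited aside matches how the lemma is actually used.
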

Now, a suboptimal ellipsoid for the intersection $\mathscr{F}$ can be derived by the S-procedure relaxation technique.
Denoting $\xi=[x^T~1]^T$, then (\ref{Eqpre_46}) can be equivalent to
\begin{eqnarray}
\label{Eqpre_22} f_i(X,x)=\xi^TA_i\xi\leq0, ~~0\leq i \leq m,
\end{eqnarray}
where
\begin{eqnarray}
\label{Eqpre_23} A_i=\left[
                         \begin{array}{cc}
                           P_i^{-1} & -P_i^{-1}x_i \\
                           -x_i^TP_i^{-1} & x_i^TP_i^{-1}x_i-1 \\
                         \end{array}
                       \right]
, ~~0\leq i \leq m.
\end{eqnarray}
Thus, the problem $\mathcal{E}_0\supset\bigcap_{i=1}^m\mathcal{E}_i$ is equivalent to
\begin{eqnarray}
\label{Eqpre_24} f_0(X,x)=\xi^TA_0\xi\leq0
\end{eqnarray}
whenever
\begin{eqnarray}
\label{Eqpre_25} f_i(X,x)=\xi^TA_i\xi\leq0,~~1\leq i \leq m.
\end{eqnarray}
By S-procedure, we can obtain $\mathcal{E}_0$ containing $\bigcap_{i=1}^m\mathcal{E}_i$ if there exist $\tau_1,\ldots,\tau_m\geq0$ such that
\begin{eqnarray}
\label{Eqpre_26} A_0\preceq\sum_{i=1}^m\tau_iA_i.
\end{eqnarray}
Using the definition of $A_i$ in (\ref{Eqpre_23}), so we can write the equation (\ref{Eqpre_26}) as an LMI
\begin{eqnarray}
\label{Eqpre_27}&&\left[
                         \begin{array}{cc}
                           P_0^{-1} & -P_0^{-1}x_0 \\
                           -x_0^TP_0^{-1} & x_0^TP_0^{-1}x_0-1 \\
                         \end{array}
                       \right]\preceq\sum_{i=1}^m\tau_i\left[
                         \begin{array}{cc}
                           P_i^{-1} & -P_i^{-1}x_i \\
                           -x_i^TP_i^{-1} & x_i^TP_i^{-1}x_i-1 \\
                         \end{array}
                       \right].
\end{eqnarray}
Then, based on the S-procedure relaxation technique, the best such outer ellipsoid of the intersection $\mathscr{F}$ can be derived by solving the following optimization problem
\begin{eqnarray}
\label{Eqpre_76} &&\min~~ f(P_0) \\
\nonumber
&&\mbox{s.t.}
~\tau_i\geq0,i=1,\ldots,m,\\
\label{Eqpre_77}&& \left[
                         \begin{array}{cc}
                           P_0^{-1} & -P_0^{-1}x_0 \\
                           -x_0^TP_0^{-1} & x_0^TP_0^{-1}x_0-1 \\
                         \end{array}
                       \right]\preceq\sum_{i=1}^m\tau_i\left[
                         \begin{array}{cc}
                           P_i^{-1} & -P_i^{-1}x_i \\
                           -x_i^TP_i^{-1} & x_i^TP_i^{-1}x_i-1 \\
                         \end{array}
                       \right]
\end{eqnarray}
with variables $P_0$, $x_0$, $\tau_i$, $i=1,\ldots,m$.
Interestingly, replacing the variable $\tilde{x}_0$ by $\tilde{x}_0=P_0^{-1}x_0$, (\ref{Eqpre_77}) is equivalent to (\ref{Eqpre_29}) by Schur complement (see the proof of Lemma \ref{the_1}). Therefore, we can obtain the following proposition.

\begin{proposition} \label{pro_4}
The optimization problem (\ref{Eqpre_28})-(\ref{Eqpre_29}) based on the SDP relaxation technique is equivalent to the optimization problem (\ref{Eqpre_76})-(\ref{Eqpre_77}) by the S-procedure relaxation method, i.e., they can derive the same minimum ellipsoid to contain the intersection of these ellipsoids.
\end{proposition}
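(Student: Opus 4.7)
The plan is to exploit the hint already present in the preamble to the proposition: constraint (\ref{Eqpre_77}) of the S-procedure formulation and constraint (\ref{Eqpre_29}) of the SDP formulation differ only by a Schur complement, under the change of variables $\tilde{x}_0 = P_0^{-1}x_0$ and the identification $\tau_i = \lambda_i$. Since both problems share the same objective $f(P_0)$ and the same sign constraints on the multipliers, establishing the equivalence of the two LMIs suffices.

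First, I would rewrite (\ref{Eqpre_29}) by negating it so that the block matrix is $\succeq 0$, and partition the resulting $(2n+1)\times(2n+1)$ matrix into three natural blocks: a top-left $(n{+}1)\times(n{+}1)$ block encoding the $x$-$1$ quadratic form, a bottom-right $n\times n$ block that is simply $P_0^{-1}\succ 0$, and the cross block whose only nonzero entry is $-\tilde{x}_0^T$. Because $P_0^{-1}\succ 0$, the Schur complement with respect to this bottom-right block is well-defined, and (\ref{Eqpre_29}) is equivalent to the positive semi-definiteness of the resulting $(n{+}1)\times(n{+}1)$ matrix.

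Next, I would carry out this Schur complement explicitly. The correction term contributed by the cross block is a rank-one matrix whose only nonzero entry sits in the scalar position and equals $\tilde{x}_0^T P_0 \tilde{x}_0$. Substituting $\tilde{x}_0 = P_0^{-1}x_0$, this scalar becomes $x_0^T P_0^{-1} x_0$, and the cross entry $\tilde{x}_0 - \sum_i \lambda_i P_i^{-1} x_i$ becomes $P_0^{-1} x_0 - \sum_i \lambda_i P_i^{-1} x_i$. Comparing block-by-block with (\ref{Eqpre_77}) rearranged as $\sum_i \tau_i A_i - A_0 \succeq 0$, the diagonal $(1,1)$ block $\sum_i \lambda_i P_i^{-1} - P_0^{-1}$, the off-diagonal, and the scalar $(2,2)$ entry $\sum_i \lambda_i (x_i^T P_i^{-1} x_i - 1) + 1 - x_0^T P_0^{-1} x_0$ all match after identifying $\tau_i = \lambda_i$.

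Finally, since the map $(P_0, x_0) \mapsto (P_0^{-1}, P_0^{-1} x_0)$ is a bijection on the set of pairs with $P_0 \succ 0$, and $f(P_0)$ depends only on $P_0$, the feasible sets of (\ref{Eqpre_28})--(\ref{Eqpre_29}) and (\ref{Eqpre_76})--(\ref{Eqpre_77}) coincide and the objectives evaluate identically, yielding identical optimal ellipsoids. The main obstacle I anticipate is purely a bookkeeping one: keeping the sign conventions straight between the $\preceq 0$ form used in (\ref{Eqpre_29}) and the $\preceq \sum_i \tau_i(\cdot)$ form used in (\ref{Eqpre_77}), and making sure that the block partitioning of the $(2n{+}1)\times(2n{+}1)$ matrix places the $-P_0^{-1}$ block in the position that allows the Schur complement to produce exactly the quadratic correction $x_0^T P_0^{-1} x_0$ — this is precisely why the third row/column in (\ref{Eqpre_29}) was introduced in the derivation of Lemma \ref{the_1}.
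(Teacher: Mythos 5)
Your proposal is correct and follows essentially the same route as the paper, which establishes Proposition \ref{pro_4} precisely by substituting $\tilde{x}_0=P_0^{-1}x_0$ and observing that (\ref{Eqpre_77}) and (\ref{Eqpre_29}) are related by a Schur complement with respect to the $-P_0^{-1}$ block, as in the proof of Lemma \ref{the_1}. You merely carry out the block bookkeeping explicitly (and correctly), identifying $\tau_i=\lambda_i$ and matching the $(1,1)$, cross, and scalar entries of $\sum_i\tau_iA_i-A_0\succeq 0$ with the complemented form of (\ref{Eqpre_29}).
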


\subsection{S-Procedure Relaxation with Maximum Volume Inscribed Ellipsoid}
Another method that can derive a suboptimal solution for the original problem (\ref{Eqpre_1})-(\ref{Eqpre_2}) is to first calculate the maximum volume ellipsoid that is contained in the intersection, which can be also cast as convex problem with LMI constrains \cite{Boyd-Vandenberghe04}. If we use Lemma \ref{lem_3} to shrink this ellipsoid by a factor of $n$ about it center, then it is guaranteed to contain the intersection.

\begin{lemma}\label{lem_3}\cite{Boyd-Vandenberghe04}
Let $\mathcal{E}_{lj}$ be the L$\ddot{o}$wner-John ellipsoid \footnote{The minimum volume ellipsoid that contains a set $C$ is called the L$\ddot{o}$wner-John ellipsoid of the set $C$.} of the convex set $C\subseteq\mathcal{R}^n$, which is a  compact set with a nonempty  interior, and let $x_{lj}$ be its center. Then,
\begin{eqnarray}
\label{Eqpre_62} x_{lj}+\frac{1}{n}(\mathcal{E}_{lj}-x_{lj})\subseteq C \subseteq \mathcal{E}_{lj}.
\end{eqnarray}
\end{lemma}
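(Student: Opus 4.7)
The statement is Fritz John's classical theorem on the minimum-volume enclosing ellipsoid, and I would prove it by contradiction using a separating hyperplane followed by an explicit ellipsoidal construction that beats $\mathcal{E}_{lj}$ in volume.

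First I would normalize. Apply an affine map so that $\mathcal{E}_{lj}$ becomes the closed unit Euclidean ball $B$ centered at the origin; this preserves containment and scales all volumes by the same constant, so minimality is preserved and $x_{lj}$ becomes $0$. The goal becomes showing $\tfrac{1}{n}B\subseteq C$. Suppose for contradiction that some $p\in \tfrac{1}{n}B$ lies outside $C$. Since $C$ is compact and convex, the supporting/separating hyperplane theorem yields a unit vector $a$ and a scalar $b$ with $a^{\Transpose}x\le b$ for all $x\in C$ and $a^{\Transpose}p>b$. Since $\|p\|\le 1/n$, we get $b<1/n$. After rotating so that $a=e_1$, we have $C\subseteq B\cap H$, where $H=\{x:x_1\le b\}$ with $b<1/n$.

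Next I would construct a competitor ellipsoid. Consider ellipsoids of revolution around the $x_1$-axis:
\begin{equation*}
E(c,\alpha,\beta)=\Big\{x:\frac{(x_1-c)^2}{\alpha^2}+\frac{x_2^2+\cdots+x_n^2}{\beta^2}\le 1\Big\},
\end{equation*}
whose volume is proportional to $\alpha\beta^{n-1}$. The containment $B\cap H\subseteq E$ is determined by two tight conditions: (i) $E$ must contain the south pole $-e_1$, giving $\alpha\ge 1+c$, and (ii) $E$ must contain the boundary circle $\{x_1=b,\ x_2^2+\cdots+x_n^2=1-b^2\}$, giving $\beta^2\big(1-(b-c)^2/\alpha^2\big)\ge 1-b^2$. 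Taking equality in both and eliminating $\alpha,\beta$ gives an explicit one-parameter family in $c$ with volume factor
\begin{equation*}
V(c)=(1+c)^{n}\Big(\tfrac{1-b}{\,1-b+2c\,}\Big)^{(n-1)/2}.
\end{equation*}

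Finally I would optimize and verify $V(c)<1$. Differentiating in $c$ (or simply choosing the natural candidate $c=(1-nb)/(n+1)$, which under $b<1/n$ lies in the admissible range) reduces the inequality $V(c)<1$ to an elementary algebraic statement that can be checked by AM-GM or Bernoulli-type manipulation, with strict inequality whenever $b<1/n$. This produces an ellipsoid of strictly smaller volume than $B$ that still contains $C$, contradicting the minimality of $\mathcal{E}_{lj}$. Hence $(1/n)B\subseteq C$, which by undoing the affine normalization is exactly $x_{lj}+\tfrac{1}{n}(\mathcal{E}_{lj}-x_{lj})\subseteq C$; the right inclusion $C\subseteq \mathcal{E}_{lj}$ is immediate from the definition of $\mathcal{E}_{lj}$.

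The main obstacle is the last step: the one-parameter optimization of $V(c)$ and the verification of the strict inequality $\alpha\beta^{n-1}<1$ precisely in the regime $b<1/n$. The reduction to a spherical cap and the two-constraint extremal geometry are the clean conceptual part; the algebraic inequality is routine but has to be carried out carefully so that the threshold $1/n$ appears as tight, and this is where the factor $n$ in the theorem ultimately originates.
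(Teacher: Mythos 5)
The paper does not actually prove this lemma---it is quoted verbatim from Boyd--Vandenberghe with a citation and no argument---so there is nothing internal to compare against. Your outline is the standard John-type proof from that very source: normalize so that $\mathcal{E}_{lj}$ is the unit ball $B$, separate a hypothetical uncovered point of $\frac{1}{n}B$ from $C$ to trap $C$ in a cap $B\cap\{x_1\le b\}$ with $b<1/n$, and then exhibit an ellipsoid of revolution covering the cap with volume factor $V(c)=(1+c)^{n}\bigl(\frac{1-b}{1-b+2c}\bigr)^{(n-1)/2}<1$, contradicting minimality. The reduction, the volume formula, and the claim that the threshold $1/n$ is exactly where the optimized $V$ hits $1$ are all correct.

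There is, however, one concrete error and one glossed step. The error: your ``natural candidate'' $c=(1-nb)/(n+1)$ has the wrong sign. The cap $B\cap\{x_1\le b\}$ has its bulk at negative $x_1$, so the competitor ellipsoid must be re-centered toward the south pole; setting $\frac{d}{dc}\log V=\frac{n}{1+c}-\frac{n-1}{1-b+2c}=0$ gives $c^{*}=\frac{nb-1}{n+1}<0$ when $b<1/n$, and one checks $V(c^{*})=\bigl(\tfrac{n(1+b)}{n+1}\bigr)^{n}\bigl(\tfrac{(1-b)(n+1)}{(n-1)(1+b)}\bigr)^{(n-1)/2}<1$ strictly for $-1<b<1/n$, with equality exactly at $b=1/n$. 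At your positive $c$ one instead has $V(c)>V(0)=1$ (indeed $\frac{d}{dc}\log V\big|_{c=0}=n-\frac{n-1}{1-b}>0$ precisely when $b<1/n$), so the proof as literally written produces no contradiction; flipping the sign repairs it. The glossed step: asserting that containment of the cap is ``determined by two tight conditions'' (south pole and rim) needs justification. It does hold, because the cap is the convex hull of its spherical boundary, and along that boundary the constraint becomes a quadratic in $t=x_1$ with leading coefficient $\alpha^{-2}-\beta^{-2}$, which is positive in the regime $b<1/n$ (there $\beta>\alpha$), so the quadratic is convex in $t$ and maximized at the endpoints $t=-1$ and $t=b$. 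With these two repairs the argument is complete.
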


Now, we consider the  problem of finding a maximum volume ellipsoid that lies inside the intersection  $\mathscr{F}$. Let ellipsoid $\bar{\mathcal{E}}_{0}$ is contained in the intersection of the ellipsoids $\mathcal{E}_{1},\ldots,\mathcal{E}_{m}$, and it is defined as follows
\begin{eqnarray}
\bar{\mathcal{E}}_{0}&=&\{x\in \mathcal{R}^n:(x-\bar{x}_0)^T \bar{P}_0^{-1}(x-\bar{x}_0)\leq1\}\\
                       &=&\{x\in \mathcal{R}^n: \xi^T\bar{A}_0\xi\leq0, \xi=[x^T~1]^T\},
\end{eqnarray}
where
\begin{eqnarray}
 \bar{A}_0=\left[
                         \begin{array}{cc}
                           \bar{P}_0^{-1} & -\bar{P}_0^{-1}\bar{x}_0 \\
                           -\bar{x}_0^T\bar{P}_0^{-1} & \bar{x}_0^T\bar{P}_0^{-1}\bar{x}_0-1 \\
                         \end{array}
                       \right].
\end{eqnarray}

Since $\bar{\mathcal{E}}_{0}\subseteq\mathcal{E}_{i}$ if and only if for every $x$ satisfying
 \begin{eqnarray}
 (x-\bar{x}_0)^T \bar{P}_0^{-1}(x-\bar{x}_0)\leq1,
 \end{eqnarray}
we have
 \begin{eqnarray}
 (x-x_i)^T P_i^{-1}(x-x_i)\leq1, ~i=1,\ldots,m.
 \end{eqnarray}
 Then, by Lemma \ref{lem_1}, this is equivalent to that there exist nonnegative scalars $\tau_1,\ldots,\tau_m$ satisfying
  \begin{eqnarray}
 \label{Eqpre_58}&&(x-x_i)^T P_i^{-1}(x-x_i)-\tau_i(x-\bar{x}_0)^T \bar{P}_0^{-1}(x-\bar{x}_0)\leq1-\tau_i,\\
 \nonumber &&\qquad\qquad\qquad~i=1,\ldots,m,~ \forall~x\in\mathcal{R}^n.
 \end{eqnarray}
 Using Schur Complements, \cite{Boyd-ElGhaoui-Feron-Balakrishnan94} has proved that (\ref{Eqpre_58}) is equivalent to the following LMI
  \begin{eqnarray}
   \label{Eqpre_59}&&\left[
                      \begin{array}{ccc}
                        -P_i & x_i-\bar{x}_0 & \bar{E}_0 \\
                        (x_i-\bar{x}_0)^T & \tau_i-1 & 0 \\
                        \bar{E}_0^T & 0 & -\tau_iI\\
                      \end{array}
                    \right]\preceq 0,~~i=1,\ldots,m,
   \end{eqnarray}
   where $\bar{P}_0=\bar{E}_0^T\bar{E}_0$, $I$ is identity matrix.
Therefore, we can obtain the optimal maximum volume ellipsoid contained in the intersection of the ellipsoids $\mathcal{E}_1,\ldots,\mathcal{E}_m$ by solving the problem
\begin{eqnarray}
\label{Eqpre_60} &&\max~~ \log \det(\bar{E}_0) \\
\label{Eqpre_61} &&~~\mbox{s.t.}~~ \tau_i\geq0, i=1,\ldots,m, (\ref{Eqpre_59})
\end{eqnarray}
with variables $\bar{E}_0$, $\bar{x}_0$, $\tau_i$, $i=1,\ldots,m$. Next, we can use Lemma $\ref{lem_3}$ to achieve an ellipsoid containing the intersection of the ellipsoids. Algorithm \ref{alg_4} gives us the specific process to calculate an outer ellipsoid, indeed, it is a rather conservative approximation \cite{Hanif-Tran-Antti-Glisic13}.

  \begin{algorithm}\label{alg_4} S-procedure relaxation with maximum volume inscribed ellipsoid
 \begin{enumerate}
   \item Solve the optimization problem (\ref{Eqpre_60})-(\ref{Eqpre_61}) of determining the maximum volume ellipsoid for the optimal solutions $\bar{E}_0$ and $\bar{x}_0$.
   \item Compute the shape matrix $P_0$ and the center $x_0$ of the outer ellipsoid  by (\ref{Eqpre_62}).
 \end{enumerate}
 \end{algorithm}
\section{Bounding Ellipsoid Relaxation}\label{sec_5}
The authors \cite{Durieu-Walter-Polyak01} divide the problem of computing the minimum bounding ellipsoid of the intersection of ellipsoids into two steps. First, one needs to find a parameterized ellipsoid to contain the intersection. Second, the optimal ellipsoid can be derived by optimizing the parameter.
\subsection{The Equivalence Between Bounding Ellipsoid Relaxation and Decoupled SDP Relaxation}
Assume that the intersection $\mathscr{F}$ of $m$ ellipsoids is a nonempty bounded region, which is defined by (\ref{Eqpre_45}), i.e.,
\begin{eqnarray}
\label{Eqpre_31} \mathscr{F}=\bigcap_{i=1}^m\{x:(x-x_i)^T P_i^{-1}(x-x_i)\leq1,~i=1,\ldots,m\}.
\end{eqnarray}
Let $\mathscr{D}^+$ be the convex set of all vectors $t=(t_1,\ldots,t_m)\in\mathcal{R}^m$, with $t_i\geq0$, $i=1,\ldots,m$, and $\sum_{i=1}^m t_i=1$.
If $x\in\mathscr{F}$, then
\begin{eqnarray}
\label{Eqpre_32} \sum_{i=1}^m t_i(x-x_i)^T P_i^{-1}(x-x_i)\leq1, ~\forall~t\in\mathscr{D}^+.
\end{eqnarray}
After simple calculations and transformations, we can obtain the following equation,
\begin{eqnarray}
 \nonumber&&\sum_{i=1}^m t_i(x-x_i)^T P_i^{-1}(x-x_i)\\
\label{Eqpre_33}&&=(x-x_t)^TP_t^{-1}(x-x_t)+\delta_t,
\end{eqnarray}
where

%
\begin{eqnarray}
\label{Eqpre_34} P_t^{-1}&=&\sum_{i=1}^m t_iP_i^{-1}\\
\label{Eqpre_35} x_t&=&P_t\sum_{i=1}^m t_iP_i^{-1}x_i\\
\label{Eqpre_36} \delta_t&=&\sum_{i=1}^m t_ix_i^TP_i^{-1}x_i-x_t^TP_t^{-1}x_t.
\end{eqnarray}
Then, $\mathscr{F}$ can be rewritten as follows \cite{Durieu-Walter-Polyak01,Ros-Sabater-Thomas02},
\begin{eqnarray}
\label{Eqpre_37}\mathscr{F}=\{x: (x-x_t)^TP_t^{-1}(x-x_t)\leq 1-\delta_t,~\forall t\in\mathscr{D}^+\}.
\end{eqnarray}
Obviously, (\ref{Eqpre_37}) may not be an ellipsoid. Since $\mathscr{F}$ is a nonempty set, then $ \delta_t\leq 1$. Let the ellipsoid $\mathcal{E}_t$ denoted as $\mathcal{E}_t=\{x: (x-x_t)^T((1-\delta_t)P_t)^{-1}(x-x_t)\leq 1\}$, then $\mathscr{F}\subseteq \mathcal{E}_t$. Therefore, we have found an ellipsoid to contain the intersection of multiple ellipsoids, but it depends on the parameter $t_i, i=1,\ldots,m$. In order to derive an optimal ellipsoid $\mathcal{E}_t$, we need to minimize the function of the shape matrix $(1-\delta_t)P_t$. The optimization problem can be written as follows
\begin{eqnarray}
\label{Eqpre_63} &&\min~~ f((1-\delta_t)P_t) \\
\label{Eqpre_64} &&~~\mbox{s.t.}~~ (t_1,\ldots,t_m)\in\mathscr{D}^+,
\end{eqnarray}
where the vector $t=(t_1,\ldots,t_m)$ is the optimization variable of this problem.

If we can obtain the optimal solution $t^{*}$ of the optimization problem (\ref{Eqpre_63})-(\ref{Eqpre_64}), then according to (\ref{Eqpre_34})-(\ref{Eqpre_35}), an outer suboptimal ellipsoid for intersection  $\mathscr{F}$ can be derived. In Section \ref{sec_3_2}, we also derive the analytic expressions (\ref{Eqpre_52})-(\ref{Eqpre_53}) of the shape matrix and the center of the minimum ellipsoid by the decoupled SDP relaxation method, interestingly, they are similar to the equations (\ref{Eqpre_34})-(\ref{Eqpre_35}). In fact, the optimization problems (\ref{Eqpre_63})-(\ref{Eqpre_64}) and (\ref{Eqpre_50})-(\ref{Eqpre_51}) are equivalent for calculating the weight of the each ellipsoid but not in form.
\begin{proposition}\label{pro_3}
When the objective function is logdet or trace function, then the optimization problem (\ref{Eqpre_63})-(\ref{Eqpre_64}) based on the bounding ellipsoid relaxation technique is equivalent to the optimization problem (\ref{Eqpre_50})-(\ref{Eqpre_51}) based on the decoupled SDP relaxation method, i.e., they have the same minimum ellipsoid to contain the intersection  $\mathscr{F}$.
\end{proposition}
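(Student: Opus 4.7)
The plan is to bridge the two problems via a change of variables $\lambda_i = \alpha t_i$ with $\sum_i t_i = 1$ and $\alpha > 0$, after first reducing the LMI constraint (\ref{Eqpre_51}) to a scalar inequality through a Schur complement. The guiding observation is that the decoupled SDP (\ref{Eqpre_50})-(\ref{Eqpre_51}) is homogeneous up to a single scalar factor $\alpha = \sum_i \lambda_i$, and the bounding ellipsoid formulation (\ref{Eqpre_63})-(\ref{Eqpre_64}) can be recovered by optimizing out this scalar in closed form.

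First I would apply Schur complement to (\ref{Eqpre_51}). Since any nontrivial feasible $\lambda$ has $\sum_i \lambda_i P_i^{-1} \succ 0$, the LMI is equivalent to the scalar inequality
\begin{equation*}
1-\sum_{i=1}^m\lambda_i+\sum_{i=1}^m\lambda_ix_i^TP_i^{-1}x_i-\Big(\sum_{i=1}^m\lambda_iP_i^{-1}x_i\Big)^T\Big(\sum_{i=1}^m\lambda_iP_i^{-1}\Big)^{-1}\Big(\sum_{i=1}^m\lambda_iP_i^{-1}x_i\Big)\geq 0.
\end{equation*}
Defining $P_\lambda$, $x_\lambda$, $\delta_\lambda$ by the same formulas as (\ref{Eqpre_34})-(\ref{Eqpre_36}) but with $\lambda_i$ in place of $t_i$, this becomes simply $\sum_i\lambda_i-\delta_\lambda\leq 1$.

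Next I would reparametrize $\lambda_i=\alpha t_i$ with $\alpha=\sum_i\lambda_i\geq 0$ and $t_i=\lambda_i/\alpha\in\mathscr{D}^+$ (for $\alpha>0$). A direct calculation gives the key identities $P_\lambda=P_t/\alpha$, $x_\lambda=x_t$, and $\delta_\lambda=\alpha\delta_t$. Thus the constraint reads $\alpha(1-\delta_t)\leq 1$, while the objective becomes $f(P_t/\alpha)$. Because $f$ is either $\trace$ or $\log\det$, $f(P_t/\alpha)$ is strictly decreasing in $\alpha$ for each fixed $t$ (scaling a positive definite matrix by $1/\alpha$ scales its trace by $1/\alpha$ and shifts its logdet by $-n\log\alpha$). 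Hence the optimal $\alpha$ for fixed $t$ saturates the constraint at $\alpha^*=1/(1-\delta_t)$, which is well-defined since $\mathscr{F}\neq\emptyset$ forces $\delta_t<1$. Plugging this back yields the profiled objective $f((1-\delta_t)P_t)$, which coincides with (\ref{Eqpre_63}), and the resulting shape matrix and center match (\ref{Eqpre_34})-(\ref{Eqpre_35}). The reverse direction is then immediate: given optimal $t^*$ for (\ref{Eqpre_63})-(\ref{Eqpre_64}), setting $\lambda_i^*=t_i^*/(1-\delta_{t^*})$ supplies a feasible point for (\ref{Eqpre_50})-(\ref{Eqpre_51}) with the same objective value.

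I expect the main obstacle to be carrying out the Schur-complement reduction and verifying the algebraic identity $\delta_\lambda=\alpha\delta_t$ without confusion between the two parameterizations; this is routine but requires careful bookkeeping since $\delta_\lambda$ depends quadratically on $\lambda$ through $x_\lambda$. A minor subtlety, worth mentioning explicitly, is the degenerate case $\alpha=0$ (corresponding to $\lambda\equiv 0$), which trivially satisfies (\ref{Eqpre_51}) but makes $P_\lambda$ and hence the objective undefined, so it contributes no candidate minimizer and can be discarded at the outset.
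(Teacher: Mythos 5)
Your proof is correct and follows essentially the same route as the paper's: both reduce the LMI (\ref{Eqpre_51}) to a scalar inequality by a Schur complement and then homogenize via $\lambda_i=\alpha t_i$ with $t\in\mathscr{D}^+$ (the paper's $\eta=\sum_i\lambda_i$ is your $\alpha$), identifying the resulting constraint with $\alpha(1-\delta_t)\leq 1$ through the definition of $\delta_t$ in (\ref{Eqpre_36}). The only presentational difference is that you profile out $\alpha$ in closed form using monotonicity of $f$ under positive scaling, which handles the trace and logdet cases uniformly, whereas the paper carries out the logdet case explicitly via the auxiliary variable $\xi=1/\eta$ and asserts the trace case is similar.
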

\begin{proof}
 The proof has three steps. Firstly, when the objective function is logdet function, we continue to simplify the optimization problem (\ref{Eqpre_50})-(\ref{Eqpre_51}). Let $\eta=\sum_{i=1}^m\lambda_i$ and $t_i=\frac{\lambda_i}{\eta}$, $i=1,\ldots,m$, then $\sum_{i=1}^mt_i=1$. Since $\eta>0$ \cite{Boyd-ElGhaoui-Feron-Balakrishnan94}, if the constraint (\ref{Eqpre_51}) is scaled by a factor of $\frac{1}{\eta}$, then the optimization problem (\ref{Eqpre_50})-(\ref{Eqpre_51}) can be rewritten as
 \begin{eqnarray}
\label{Eqpre_65} &&\min~~ -\log\det\Big(\sum_{i=1}^m t_iP_i^{-1}\Big)-n\cdot \log(\eta) \\
\label{Eqpre_66} &&\mbox{s.t.}~
\left[
                                     \begin{array}{cc}
                                       \frac{1}{\eta}-\sum_{i=1}^m t_i+\sum_{i=1}^m t_ix_i^TP_i^{-1}x_i&\sum_{i=1}^m t_ix_i^TP_i^{-1} \\
                                      \sum_{i=1}^m t_iP_i^{-1}x_i & \sum_{i=1}^m t_iP_i^{-1} \\
                                     \end{array}
                                   \right]\succeq0\\
  \label{Eqpre_67} &&~~~~ ~ \sum_{i=1}^mt_i=1,~ t_i\geq0,~i=1,\ldots,m.
\end{eqnarray}
Let $\xi=\frac{1}{\eta}$, and use Schur Complements, the above optimization problem (\ref{Eqpre_65})-(\ref{Eqpre_67}) is equivalent to the following problem
   \begin{eqnarray}
\label{Eqpre_68} &&\min~ -\log\det\Big(\sum_{i=1}^m t_iP_i^{-1}\Big)+n\cdot \log(\xi) \\
 \label{Eqpre_69} &&\mbox{s.t.}~  \xi-\sum_{i=1}^m t_i+\sum_{i=1}^m t_ix_i^TP_i^{-1}x_i\\
 \nonumber &&\qquad-\sum_{i=1}^m t_ix_i^TP_i^{-1}\Big(\sum_{i=1}^m t_iP_i^{-1}\Big)^{-1}\sum_{i=1}^m t_iP_i^{-1}x_i\geq0  \\
  \label{Eqpre_70} &&~  \sum_{i=1}^mt_i=1,~ t_i\geq0,~i=1,\ldots,m.
\end{eqnarray}
Secondly, because of the monotonicity of the $\log$ function, we can introduce an optimization variable $\xi$, then the optimization problem (\ref{Eqpre_63})-(\ref{Eqpre_64})
\begin{eqnarray}
\label{Eqpre_71} &&\min~~ n\cdot \log(1-\delta_t)-\log\det\Big(\sum_{i=1}^m t_iP_i^{-1}\Big) \\
\label{Eqpre_72} &&~~\mbox{s.t.}~~ (t_1,\ldots,t_m)\in\mathscr{D}^+
\end{eqnarray}
is equivalent to
\begin{eqnarray}
\label{Eqpre_73} &&\min~~ n\cdot \log(\xi)-\log\det\Big(\sum_{i=1}^m t_iP_i^{-1}\Big) \\
\label{Eqpre_74} &&~~ \mbox{s.t.}~~\xi-1+\delta_t\geq0,\\
\label{Eqpre_75} &&~~\qquad (t_1,\ldots,t_m)\in\mathscr{D}^+.
\end{eqnarray}
Thirdly, according to the definition of $\delta_t$ in (\ref{Eqpre_36}), it is shown that the optimization problems (\ref{Eqpre_73})-(\ref{Eqpre_75}) and (\ref{Eqpre_68})-(\ref{Eqpre_70}) have the same optimal solution. When the objective function is trace function, the proof is similar. Thus the proof of Proposition \ref{pro_3} is finished.
\end{proof}
\begin{remark}\label{rem_2}
Based on Proposition \ref{cor_2}, Proposition \ref{pro_4} and Proposition \ref{pro_3}, we can derive that the SDP relaxation method, S-procedure method and bounding ellipsoid relaxation method can achieve the same solution for the optimization problem (\ref{Eqpre_1})-(\ref{Eqpre_2}). The equivalence among these important relaxation techniques is showed in Fig. \ref{fig2}.
\end{remark}

\begin{figure}[!t]
\centering
\includegraphics[width=2.5in]{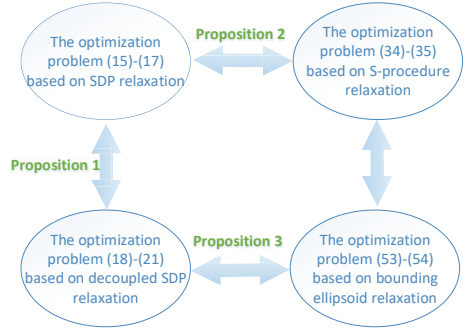}
\caption{The equivalence among these relaxation techniques.}
\label{fig2}
\end{figure}
\subsection{The Significance of the Equivalence}
Actually, in \cite{Durieu-Walter-Polyak01}, the authors have proved $\delta_t\geq0$, then the ellipsoid $\mathcal{E}_t^{'}=\{x:(x-x_t)^TP_t^{-1}(x-x_t)\leq 1\}\supseteq\mathcal{E}_t$. As the optimization problem (\ref{Eqpre_63})-(\ref{Eqpre_64}) may lead to high computational complexity, then they advise to find the optimal value of vector $t$ by deleting $(1-\delta_t)$ in (\ref{Eqpre_63}), which yields the following optimization problem \cite{Durieu-Walter-Polyak01},
\begin{eqnarray}
\label{Eqpre_38} &&\min~~ f(P_t) \\
\label{Eqpre_39} &&~~\mbox{s.t.}~~ (t_1,\ldots,t_m)\in\mathscr{D}^+.
\end{eqnarray}
Here the vector $t=(t_1,\ldots,t_m)$ is the optimization variable.
If the optimal value $t^{*}$ of $t$ takes place within the ellipsoid $\mathcal{E}_t^{'}$, the final ellipsoidal approximation is improved by taking
\begin{eqnarray}
\label{Eqpre_57}\mathcal{E}_{t^{*}}=\{x:(x-x_{t^{*}})^T((1-\delta_{t^{*}})P_{t^{*}})^{-1}(x-x_{t^{*}})\leq 1\}.
\end{eqnarray}
Finally, we can use Algorithm \ref{alg_5} to obtain a suboptimal ellipsoid containing the intersection of these ellipsoids $\mathcal{E}_1,\ldots,\mathcal{E}_m$.
  \begin{algorithm}\label{alg_5} Bounding ellipsoid relaxation without optimizing $1-\delta_{t}$
 \begin{enumerate}
   \item Solve the optimization problem (\ref{Eqpre_38})-(\ref{Eqpre_39}) for the optimal solutions $t_i$, $i=1,\ldots,m$.
   \item Compute the shape matrix $P_t$ and center $x_t$ of the outer ellipsoid  by (\ref{Eqpre_34})-(\ref{Eqpre_35}). Meanwhile, calculate $\delta_t$ by (\ref{Eqpre_36}).
   \item   Obtain the suboptimal ellipsoid with the shape matrix $P_0=(1-\delta_{t})P_{t}$ and the center $x_0=x_t$.
 \end{enumerate}
 \end{algorithm}
 \begin{corollary}\label{cor_3}
 When the objective function is logdet or trace function, compared with Algorithm \ref{alg_5}, Algorithm \ref{alg_3} based on the decoupled SDP relaxation method can derive a tighter ellipsoid to contain the intersection.
 \end{corollary}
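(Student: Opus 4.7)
The plan is to observe that both algorithms output an ellipsoid parametrized by a weight vector $t \in \mathscr{D}^+$ of exactly the same form, namely shape matrix $(1-\delta_t)P_t$ and centre $x_t$, and then to show that Algorithm \ref{alg_3} selects the weight that directly minimises $f((1-\delta_t)P_t)$ whereas Algorithm \ref{alg_5} selects a weight that only minimises the surrogate $f(P_t)$.

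First, I would invoke Proposition \ref{pro_3} to reinterpret Algorithm \ref{alg_3} as solving exactly the bounding-ellipsoid problem (\ref{Eqpre_63})--(\ref{Eqpre_64}). Tracing the change of variables used in that proof ($\eta=\sum_i\lambda_i$, $t_i=\lambda_i/\eta$, $\xi=1/\eta$), the formulas (\ref{Eqpre_52})--(\ref{Eqpre_53}) returned by Algorithm \ref{alg_3} coincide with the bounding ellipsoid at weight $t^{*}_3 := (\lambda_1^{*}/\eta^{*},\dots,\lambda_m^{*}/\eta^{*})$; indeed, from $P_0^{-1}=\sum_i\lambda_i^{*}P_i^{-1}=\eta^{*}P_{t^{*}_3}^{-1}$ and the fact that the optimum of the log-relaxation forces $\xi^{*}=1-\delta_{t^{*}_3}$ one gets $P_0=(1-\delta_{t^{*}_3})P_{t^{*}_3}$ and $x_0=x_{t^{*}_3}$. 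Hence $t^{*}_3 \in \arg\min_{t\in\mathscr{D}^+} f\bigl((1-\delta_t)P_t\bigr)$.

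Second, I would write down Algorithm \ref{alg_5}'s output. By construction it picks $t^{*}_5 \in \arg\min_{t\in\mathscr{D}^+} f(P_t)$ and then forms the ellipsoid $\mathcal{E}_{t^{*}_5}$ with shape matrix $(1-\delta_{t^{*}_5})P_{t^{*}_5}$ and centre $x_{t^{*}_5}$. In particular $t^{*}_5\in\mathscr{D}^+$ is a feasible point of the optimisation problem whose minimiser is $t^{*}_3$. Therefore
\[
f\bigl((1-\delta_{t^{*}_3})P_{t^{*}_3}\bigr) \;\le\; f\bigl((1-\delta_{t^{*}_5})P_{t^{*}_5}\bigr),
\]
which is exactly the statement that Algorithm \ref{alg_3} returns a tighter ellipsoid in the sense of $f$ --- smaller sum of squared semi-axes when $f=\mathrm{trace}$ and smaller volume when $f=\log\det$.

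The only non-trivial step is the first one: rigorously matching the analytic expressions (\ref{Eqpre_52})--(\ref{Eqpre_53}) of Algorithm \ref{alg_3} with the $(1-\delta_t)P_t$ parametrisation used by Algorithm \ref{alg_5}. This is where the equivalence established in Proposition \ref{pro_3} does all the work --- once that identification is in place, the comparison collapses to the trivial observation that the optimum of an objective is no larger than its value at any feasible point. The proof can therefore be presented as a one-line application of Proposition \ref{pro_3} followed by the feasibility argument above.
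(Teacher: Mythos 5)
Your proposal is correct and follows essentially the same route as the paper: the paper's own (one-line) proof likewise observes that the weight returned by Algorithm \ref{alg_5} is feasible for the bounding-ellipsoid problem (\ref{Eqpre_63})--(\ref{Eqpre_64}) and then invokes Proposition \ref{pro_3} to identify Algorithm \ref{alg_3}'s output with the optimum of that problem. Your version merely spells out the change-of-variables identification inside Proposition \ref{pro_3} in more detail than the paper does.
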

 \begin{proof}
 Since the optimal solution of the optimization problem (\ref{Eqpre_38})-(\ref{Eqpre_57}) is feasible for the problem (\ref{Eqpre_63})-(\ref{Eqpre_64}), according to Proposition \ref{pro_3}, we can obtain the result of Corollary \ref{cor_3}.
 \end{proof}
\begin{remark}\label{rem_3}~
 Algorithm  \ref{alg_5}  is widely applied to set-membership filter for target tracking \cite{Schweppe68}, since it has less computing time, which can be seen in Table \ref{tab_1}. However, based on Corollary \ref{cor_3}, the minimum ellipsoid calculated by Algorithm \ref{alg_3} can be tighter than that by Algorithm \ref{alg_5}. Therefore, if  Algorithm \ref{alg_3} is used in set-membership filter, it can achieve a better estimation performance (see Figs. \ref{fig4}-\ref{fig5}).
\end{remark}

In the distributed estimation fusion setting, if the cross-correlation of local sensor estimation errors is unknown or impractical, the covariance intersection (CI) algorithm \cite{Uhlmann96,Julier-Uhlmann09,Guo-Chen-Leung-Liu10} is widely used to deal with this problem.
Notice the definition of $P_t$ in (\ref{Eqpre_34}), interestingly, the steps 1-2 of Algorithm  \ref{alg_5} is same as the CI algorithm. Based on Proposition \ref{pro_3} and $0\leq\delta_t\leq1$, we can obtain the following corollary.
 \begin{corollary}\label{cor_4}
 Compared with CI algorithm,  Algorithm \ref{alg_3} based on the decoupled SDP relaxation method can derive a tighter ellipsoid containing the intersection of ellipsoids.
 \end{corollary}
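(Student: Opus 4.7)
The plan is to exploit Proposition \ref{pro_3} together with the already-noted bound $0\leq \delta_t\leq 1$. By Proposition \ref{pro_3}, Algorithm \ref{alg_3} is equivalent to the bounding-ellipsoid optimization (\ref{Eqpre_63})-(\ref{Eqpre_64}), which minimizes $f((1-\delta_t)P_t)$ over $t\in\mathscr{D}^+$, and produces the ellipsoid with shape matrix $(1-\delta_{t^*})P_{t^*}$ and center $x_{t^*}$. In contrast, CI (as noted in the paragraph preceding the corollary, and in view of (\ref{Eqpre_34})-(\ref{Eqpre_35})) yields an ellipsoid with shape matrix $P_{t^*_{\text{CI}}}$, where $t^*_{\text{CI}}$ minimizes $f(P_t)$ over the same simplex $\mathscr{D}^+$.

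Next, I would compare the two output ellipsoids through two inequalities. First, at any fixed $t\in\mathscr{D}^+$, since $\delta_t\geq 0$ we have $(1-\delta_t)P_t \preceq P_t$ in the L\"owner order; because both $\trace$ and $\log\det$ are monotone with respect to the L\"owner order on the positive-definite cone, this gives $f((1-\delta_t)P_t)\leq f(P_t)$. Second, since $t^*$ is by definition the minimizer of $f((1-\delta_t)P_t)$ over $\mathscr{D}^+$, evaluating at the CI optimum $t=t^*_{\text{CI}}$ yields
\begin{equation*}
f((1-\delta_{t^*})P_{t^*}) \;\leq\; f((1-\delta_{t^*_{\text{CI}}})P_{t^*_{\text{CI}}}) \;\leq\; f(P_{t^*_{\text{CI}}}),
\end{equation*}
which is exactly the desired statement that the ellipsoid produced by Algorithm \ref{alg_3} is no larger than the CI ellipsoid in the chosen size measure. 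Both ellipsoids contain $\mathscr{F}$ (the first by Proposition \ref{pro_3} and the validity of the bounding-ellipsoid relaxation, the second by the derivation in (\ref{Eqpre_31})-(\ref{Eqpre_37}) with $\delta_t$ dropped), so the comparison is meaningful.

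There is essentially no obstacle here: the corollary is a direct consequence of (i) the equivalence established in Proposition \ref{pro_3}, (ii) the sign bound $0\leq\delta_t\leq 1$ quoted from \cite{Durieu-Walter-Polyak01}, and (iii) the monotonicity of the two admissible objective functions. The only minor point worth spelling out is the monotonicity of $\log\det$ and $\trace$ under $\preceq$, which is standard. I would finish by remarking that the inequality is generally strict whenever $\delta_{t^*_{\text{CI}}}>0$, explaining why Algorithm \ref{alg_3} provides a genuinely tighter outer approximation than CI in practice.
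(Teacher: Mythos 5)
Your argument is correct and is precisely the reasoning the paper intends: the paper gives no written proof of Corollary~\ref{cor_4} beyond the remark that it follows from Proposition~\ref{pro_3} and $0\leq\delta_t\leq 1$, and your chain $f((1-\delta_{t^*})P_{t^*})\leq f((1-\delta_{t^*_{\mathrm{CI}}})P_{t^*_{\mathrm{CI}}})\leq f(P_{t^*_{\mathrm{CI}}})$ is exactly that argument made explicit. The only detail worth keeping in mind is that for the $\log\det$ objective one needs $1-\delta_t>0$, which the paper guarantees only loosely via the nonemptiness of $\mathscr{F}$.
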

Therefore, if we use the decoupled SDP relaxation method to solve the distributed estimation fusion with unavailable cross-correlation among multiple sensors, the estimation performance may be further improved (see Figs. \ref{fig6}-\ref{fig7}).

 A recursive version of the problems in (\ref{Eqpre_38})-(\ref{Eqpre_57}) is also considered by \cite{Durieu-Walter-Polyak01}. Let $\mathcal{E}^k$ be the approximate ellipsoid obtained after processing the first $k$ ellipsoids $\mathcal{E}_1,\ldots,\mathcal{E}_k$. The center and the shape matrix of ellipsoid $\mathcal{E}^k$ are denoted by $x^k$ and $P^k$. The next approximation is to find $\mathcal{E}^{k+1}$ containing $\mathcal{E}^k\bigcap\mathcal{E}_{k+1}$. The recursive algorithm is formulated in Algorithm \ref{alg_1}.
 \begin{algorithm}\label{alg_1}Recursive bounding ellipsoid relaxation without optimizing $1-\delta_{t}$
 \begin{enumerate}
   \item Initialized, $x^1=x_1$ and $P^1=P_1$.
   \item Calculate the shape matrix $P_t^{k+1}=(t(P^k)^{-1}+(1-t)P_{k+1}^{-1})^{-1}$ based on (\ref{Eqpre_34}).
   \item Compute the center $x_t^{k+1}=P_t^{k+1}(t(P^{k})^{-1}x^k+(1-t)P_{k+1}^{-1}x_{k+1})$ and $\delta_t$ based on (\ref{Eqpre_35}) and (\ref{Eqpre_36}), respectively.
   \item Minimize the objective function $f(P_t^{k+1})$ in (\ref{Eqpre_38})-(\ref{Eqpre_39}), and obtain the optimal value $t^{*}$.
   \item Achieve the shape matrix $P^{k+1}=(1-\delta_{t^{*}})P_{t^{*}}^{k+1}$ and the center $x^{k+1}=x_{t^{*}}^{k+1}$ of the ellipsoid $\mathcal{E}^{k+1}$.
   \item Go to step 2 until $k=m$.
 \end{enumerate}
 \end{algorithm}
\begin{remark}\label{rem_5}~
\begin{itemize}
  \item If the steps 4-5 are replaced by solving the optimization problem (\ref{Eqpre_50})-(\ref{Eqpre_53}), then Algorithm \ref{alg_1} may get a tighter ellipsoid to contain the intersection .
  \item Algorithm \ref{alg_1} may be applied to the distributed estimation fusion for delay systems \cite{Sun-Ma14}, but it generates a more pessimistic final ellipsoid than the nonrecursive algorithm. If we modify the shape matrix to $P^{k+1}=P_{t^{*}}^{k+1}$ in the step 5, then it is similar to the sequential CI Kalman fusion \cite{Cong-Li-Qi-Sheng16}.
\end{itemize}

\end{remark}
\section{Simulation Results} \label{sec_6}
In this section, in order to show the analytic results of the equivalence and the computational complexity, we consider two cases: static case and dynamic case. The following simulation results are under Matlab R2015a with CVX.

1) \emph{Static Case}: Since the optimization problem (\ref{Eqpre_28})-(\ref{Eqpre_29}) by the SDP relaxation method is same as (\ref{Eqpre_76})-(\ref{Eqpre_77}) by the S-procedure relaxation method, they have the same computational complexity. For the optimization problem (\ref{Eqpre_63})-(\ref{Eqpre_64}) by the bounding ellipsoid relaxation method, Durieu, Walter and Polyak \cite{Durieu-Walter-Polyak01} recognized that the direct acquisition of the optimal value of the problem (\ref{Eqpre_63})-(\ref{Eqpre_64}) becomes marginal in most examples treated so far, since it is at cost of more computation. Therefore, we focus on comparing the computational complexity of the SDP relaxation method, the decoupled SDP relaxation method and the other three algorithms.

 Suppose that there exist three local estimated ellipsoids $\mathcal{E}_1$, $\mathcal{E}_2$, $\mathcal{E}_3$, the shape matrix and the center are denoted as follows
\begin{eqnarray}
\nonumber &&P_1=\left[
                      \begin{array}{cc}
                        6 & -5 \\
                        -5 & 12 \\
                      \end{array}
                    \right],~P_2=\left[
                      \begin{array}{cc}
                        10 & 1 \\
                        1 & 3\\
                      \end{array}
                    \right],~P_3=\left[
                      \begin{array}{cc}
                        5 & 5 \\
                        5 & 9\\
                      \end{array}
                    \right]\\
\nonumber  &&x_1=[12~11]^T,~~x_2=[12~10],~~x_3=[12~\xi].
\end{eqnarray}
Here, $\xi$ is a random variable with uniform distribution in interval $[9 ~10]$. The goal is determining an ellipsoid $\mathcal{E}$ containing the intersection of the three ellipsoids. The matrix $P$ is denoted by the shape matrix ellipsoid $\mathcal{E}$. The objective function $f(P)$ is $logdet(P)$, which means the volume of the ellipsoid $\mathcal{E}$.


We use the above five algorithms to compute the ellipsoid $\mathcal{E}$ containing the intersection of the three ellipsoids.
Table \ref{tab_1} shows the performance of the above five algorithms. Meanwhile, the values of $\log\det(P)$ and the computing time are calculated by the average of 100 Monte Carlo runs.
It is easy to see that  Algorithm \ref{alg_2} and Algorithm \ref{alg_3} do derive the same ellipsoid to contain the intersection, which is consistent with the result of  Proposition \ref{cor_2}. But the computing time of Algorithm \ref{alg_3} is less than that of Algorithm \ref{alg_2}, the reason is that Algorithm \ref{alg_3} has less optimization variables (see Remark \ref{rem_1}).

Fig. \ref{fig1} shows that the minimum ellipsoid  derived  by Algorithm \ref{alg_2} can contain the intersection of the three ellipsoids.  Moreover, by Fig. \ref{fig2}, Table \ref{tab_1}, Remark \ref{rem_3} and Remark \ref{rem_5}, Algorithm \ref{alg_2}-\ref{alg_3} may provide the smallest volume ellipsoid compared with the other three algorithms. Although Algorithm \ref{alg_4} can produce a maximum volume ellipsoid inscribed in the intersection of the three ellipsoids, this ellipsoid is shrunk by a factor of $n$ about it center, then it is only guaranteed to contain the intersection.
If we consider the computing time, the Algorithm \ref{alg_5} might be the appropriate choice. By Table \ref{tab_1} and Remark~\ref{rem_5}, Algorithm \ref{alg_1} generates a more pessimistic ellipsoid than Algorithm \ref{alg_5}, but this recursive ellipsoid algorithm can be applied to the distributed estimation fusion for delay systems.

\begin{table}[!t]
\caption{Comparisons of different algorithms}
\renewcommand{\arraystretch}{1.5}
\label{tab_1}
\centering
\begin{tabular}{|c|c|c|}
\hline
Algorithm & logdet(P) & computing time\\
\hline
Algorithm1 & 2.6293  & 2.4067\\
\hline
Algorithm2 & 2.6293  & 2.0996\\
\hline
 Algorithm3 & 4.2379 & 1.3711\\
\hline
Algorithm4 & 2.6378  & 1.2958\\
\hline
Algorithm5 & 2.6500  & 2.5663\\
\hline
\end{tabular}
\end{table}
\begin{figure}[!t]
\centering
\includegraphics[width=3in]{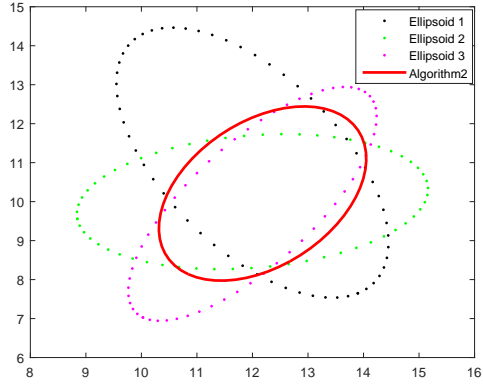}
\caption{Using Algorithm \ref{alg_3} to calculate a best outer ellipsoid containing the intersection of three ellipsoids.}
\label{fig1}
\end{figure}

2) \emph{Dynamic Case}: Consider a constant-velocity moving target with three local sensors, and the dynamic model is as follows:
\begin{eqnarray}
\nonumber x_{k+1}&=&F_kx_k+w_k\\
\nonumber y_{k+1}^i&=&x_{k+1}+v_k^i,~i=1,2,3,
\end{eqnarray}
where $F_k=\left[
                      \begin{array}{cc}
                        1 & T \\
                        0 & 1 \\
                      \end{array}
                    \right]$ and sampling time $T=1$. The process noise and measurement noise are assumed to be confined to specified ellipsoidal sets
\begin{eqnarray}
\nonumber W_k&=&\{w_k: w_k^TQ_k^{-1}w_k\leq1\}\\
\nonumber V_k&=&\{v_k: v_k^T{R_k^i}^{-1}v_k\leq1\}, i=1,2,3,
\end{eqnarray}
where
\begin{eqnarray}
\nonumber Q_k&=&\left[
                          \begin{array}{cc}
                            \frac{T^3}{3} & \frac{T^2}{2} \\
                             \frac{T^2}{2}& T \\
                          \end{array}
                        \right]\\
 \nonumber R_k^1&=&\left[
                   \begin{array}{cc}
                     20 & 0\\
                     0 & 20\\
                   \end{array}
                 \right],
  \nonumber R_k^2=\left[
                   \begin{array}{cc}
                     18 & 0\\
                     0 & 22\\
                   \end{array}
                 \right],
   \nonumber R_k^3=\left[
                   \begin{array}{cc}
                     22 & 0\\
                     0 & 18\\
                   \end{array}
                 \right].
\end{eqnarray}
The goal of this example is to find a minimum ellipsoid to contain the true state by the set-membership filter and the distributed estimation fusion, respectively. In general, the set-membership filter contains two steps: predicted step and updated step.

The predicted step of a single sensor is to compute a predicted ellipsoid $\mathcal{E}_{k+1|k}^i$ containing the true state by the state function and an updated ellipsoid $\mathcal{E}_{k|k}^i$, $i=1,2,3$. Meanwhile, the shape matrix $P_{k+1|k}^i$ and the center $x_{k+1|k}^i$ of the predicted ellipsoid are calculated as follows \cite{Durieu-Walter-Polyak01}:
\begin{eqnarray}
\nonumber P_{k+1|k}^i&=&\frac{1}{\tau_1}F_kP_{k|k}^iF_k^T+\frac{1}{\tau_2}Q_k\\
 \nonumber x_{k+1|k}^i&=&F_kx_{k|k}^i,
\end{eqnarray}
where
\begin{eqnarray}
\nonumber \tau_1^i&=&\frac{\sqrt{trace(F_kP_{k|k}^iF_k^T)}}{\sqrt{trace(F_kP_{k|k}^iF_k^T)}+\sqrt{trace(Q_k)}}\\
 \nonumber \tau_2^i&=&\frac{\sqrt{trace(Q_k)}}{\sqrt{trace(F_kP_{k|k}^iF_k^T)}+\sqrt{trace(Q_k)}},
\end{eqnarray}
$x_{k|k}^i$ and $P_{k|k}^i$ are the center and the shape matrix of the updated ellipsoid $\mathcal{E}_{k|k}^i$ at time $k$, respectively.

The updated step is to determine an updated ellipsoid $\mathcal{E}_{k+1|k+1}^i$ to contain the intersection of the predicted ellipsoid $\mathcal{E}_{k+1|k}^i$ and the measurement ellipsoid by each sensor, where the measurement ellipsoid is defined by $\mathcal{V}_{k+1}^i=\{x_{k+1}: (y_{k+1}^i-x_{k+1})^TR_{k+1}^{i^{-1}}(y_{k+1}^i-x_{k+1})\leq 1\}$. In this example, the target starts with $x_0=[1~1]$. Assume that the center and the shape matrix of the initial bounding ellipsoid are $\hat{x}_0=[2~2]^T$ and $P_0=diag(50,50)$, respectively. The following simulation results are calculated by the average of 100 Monte Carlo runs.

Firstly, we compare the estimation precision of Algorithm \ref{alg_3} with that of Algorithm \ref{alg_5} by a single sensor. Since Algorithm \ref{alg_5} is widely applied to set-membership filter in updated step, we use the measurement of sensor 1 to calculate the updated ellipsoid $\mathcal{E}_{k+1|k+1}^1$ by Algorithm \ref{alg_3} and Algorithm \ref{alg_5}, respectively. Figs. \ref{fig4}-\ref{fig5} show the root mean square error (RMSE) of the state estimation and the volume of the updated ellipsoid $\mathcal{E}_{k+1|k+1}^1$. It is clear to see that Algorithm \ref{alg_3} based on the decoupled SDP method performs better than Algorithm \ref{alg_5}, which is consistent with the result of Corollary \ref{cor_3}. Therefore, if we want to obtain the higher estimation precision, Algorithm \ref{alg_3} is a better choice.

Secondly, in the fusion center, we compare the estimation precision of Algorithm \ref{alg_3} with that of CI method. Assume that each sensor can obtain an updated ellipsoid $\mathcal{E}_{k+1|k+1}^i$ by the set-membership filter with Algorithm \ref{alg_3}, $i=1,2,3$, and these local updated ellipsoids are sent to the fusion center.
The goal of the fusion center is to calculate an optimal ellipsoid containing the intersection of $\mathcal{E}_{k+1|k+1}^i$, $i=1,2,3$. Based on Algorithm \ref{alg_3} and CI method, we can derive the fused ellipsoid, respectively. The results are plotted in Figs. \ref{fig6}-\ref{fig7}. From Figs. \ref{fig6}-\ref{fig7}, we can see that the estimation precisions of Algorithm \ref{alg_3} and CI method are higher than that of each sensor. The Algorithm \ref{alg_3} performs the best, which is also consistent with the result of Corollary \ref{cor_4}.

\begin{figure}[!t]
\centering
\includegraphics[width=3in]{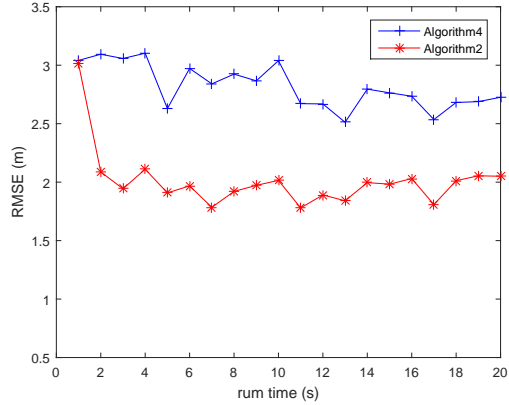}
\caption{Comparison of the RMSE of state estimation for sensor 1}
\label{fig4}
\end{figure}

\begin{figure}[!t]
\centering
\includegraphics[width=3in]{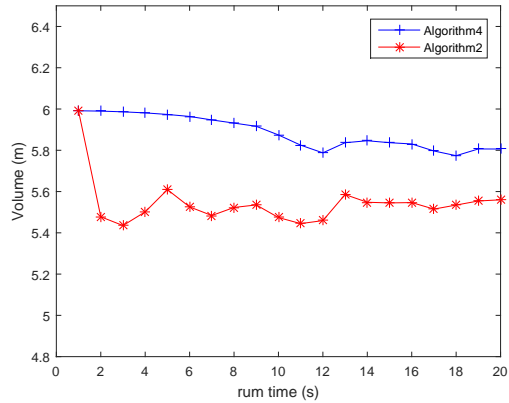}
\caption{Comparison of the volume of updated ellipsoid for sensor 1}
\label{fig5}
\end{figure}

\begin{figure}[!t]
\centering
\includegraphics[width=3in]{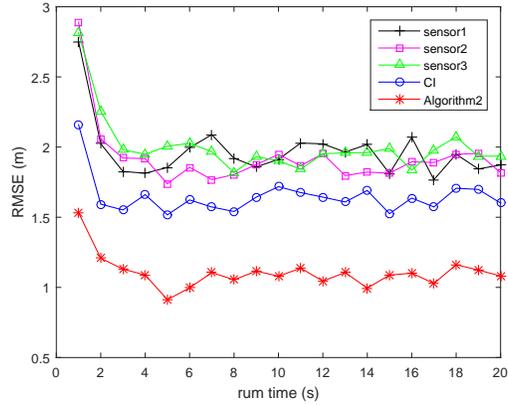}
\caption{Comparison of the RMSE of state estimation in the fusion center}
\label{fig6}
\end{figure}

\begin{figure}[!t]
\centering
\includegraphics[width=3in]{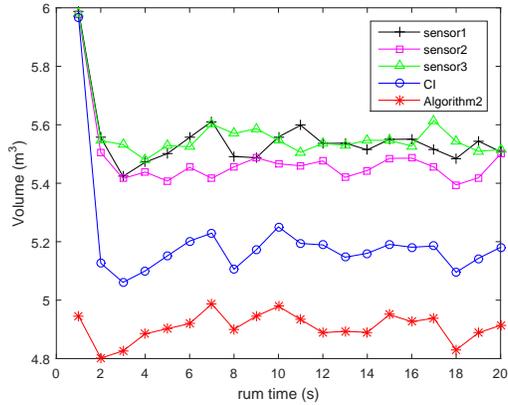}
\caption{Comparison of the volume of fused ellipsoid in the fusion center}
\label{fig7}
\end{figure}
\section{Conclusions and Future Work}\label{sec_7}
 This paper has investigated various solving techniques for the optimization problem that determining a minimum ellipsoid containing the intersection of multiple ellipsoids. In fact, this optimization problem is difficult to solve due to that there exist the infinite number of constraints. Moreover, it is also a non-convex optimization problem, which needs to compute the maximum of the quadratic function. Therefore, there are many researchers address itself to relax this hard optimization problem to a convex optimization problem, which can be solved effectively by interior point methods.
 There are three major relaxation methods involving SDP relaxation, S-procedure relaxation and bounding ellipsoid relaxation, which are derived by different ideas or viewpoints. However, it is unclear for the interrelationships among these methods and insight to the pros and cons of these methods. This paper has revealed the equivalence among the three relaxation methods by three stages. Firstly, the SDP relaxation method can be equivalently simplified to a decoupled SDP relaxation method. Secondly, the equivalence between the SDP relaxation method and the S-procedure relaxation method can be obtained by rigorous analysis. Thirdly, we establish the equivalence between the decoupled SDP relaxation method and the bounding ellipsoid relaxation method. Therefore, the three relaxation methods are unified through the decoupled SDP relaxation method. By analysis of the computational complexity, the decoupled SDP relaxation method has the least computational burden among the three methods. The above results are helpful for the research of set-membership filter and distributed estimation fusion. Finally, the performance of each method is evaluated by some typical numerical examples in information fusion and filtering.

 We believe that this important problem, determining a minimum ellipsoid containing the intersection of multiple ellipsoids, is worth studying in future work, since it can be widely applied to many practiced fields. The future researches may involve:

\begin{itemize}
  \item Since this optimization problem is NP-hard, we may aim at finding a better relaxation technique to derive a tighter ellipsoid that containing the intersection of the ellipsoids.
  \item There are many randomized methods to deal with a lot of complex problems \cite{Huang-Palomar14,Nemirovski-Juditsky-Lan-Shapiro09}, then whether these randomized methods can be applied to this optimization problem.
  \item For large-scale problems, the interior point method may not be suitable for solving SDP. Then we hope to derive a first order method to solve these relaxed optimization problems.
\end{itemize}

\section{Appendix}
\begin{lemma}\label{lem_2}
(Schur Complements) \cite{Boyd-ElGhaoui-Feron-Balakrishnan94}: Given constant matrices $A$, $B$, $C$, where $C=C^T$ and $A=A^T$, then the condition
\begin{eqnarray}
\left[
  \begin{array}{cc}
    A & B \\
    B^T & C \\
  \end{array}
\right]\succeq0
\end{eqnarray}
is equivalent to
\begin{eqnarray}
A\succeq0, C-B^TA^{+}B\succeq0,(I-A^{+}A)B=0,
\end{eqnarray}
and also to
\begin{eqnarray}
C\succeq0, A-BC^{+}B^T\succeq0,(I-C^{+}C)B^T=0,
\end{eqnarray}
where $A^{+}$ and $C^{+}$ denote the Moore-Penrose pseudoinverse of $A$ and $C$, respectively.
\end{lemma}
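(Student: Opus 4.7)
\textbf{Proof proposal for Lemma \ref{lem_2}.}

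The plan is to reduce the block positive semidefiniteness of $M = \begin{bmatrix} A & B \\ B^T & C \end{bmatrix}$ to conditions on its diagonal blocks via a block $LDL^T$-style congruence, and then obtain the second equivalence by swapping the roles of $A$ and $C$ through an orthogonal block permutation.

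First I would handle the easy half of the forward direction: every principal submatrix of a positive semidefinite matrix is positive semidefinite, so $M \succeq 0$ immediately yields $A \succeq 0$. Next I would derive the range condition $(I - A^+ A)B = 0$. Because $A$ is symmetric, $A^+ A = A A^+$ is the orthogonal projector onto $\operatorname{range}(A)$, so this condition is equivalent to $\ker(A) \subseteq \ker(B^T)$. For $x \in \ker(A)$ and arbitrary $y$, I would plug the test vector $(tx,\,y)^T$ into the inequality $v^T M v \geq 0$; the result is the affine function $2t\, x^T B y + y^T C y$ of $t \in \mathbb{R}$, and nonnegativity on all of $\mathbb{R}$ forces $x^T B y = 0$ for every $y$, hence $B^T x = 0$.

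The central step is the congruence identity
$$M = \begin{bmatrix} I & 0 \\ B^T A^+ & I \end{bmatrix} \begin{bmatrix} A & 0 \\ 0 & C - B^T A^+ B \end{bmatrix} \begin{bmatrix} I & A^+ B \\ 0 & I \end{bmatrix},$$
which I would verify by a brief direct multiplication, using the Moore--Penrose identities $A A^+ A = A$ and $A^+ A A^+ = A^+$ together with the range condition $A A^+ B = B$ just established. The outer triangular factors are unipotent, hence invertible, so this congruence preserves positive semidefiniteness, reducing $M \succeq 0$ to positive semidefiniteness of the block-diagonal middle factor, which is simply $A \succeq 0$ together with $C - B^T A^+ B \succeq 0$. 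The same identity simultaneously yields the reverse direction: given the three listed conditions, one reassembles $M$ as a congruence of a positive semidefinite block-diagonal matrix and thereby recovers $M \succeq 0$.

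The second equivalence follows by conjugating $M$ by the orthogonal block permutation $P = \begin{bmatrix} 0 & I \\ I & 0 \end{bmatrix}$ to obtain $P^T M P = \begin{bmatrix} C & B^T \\ B & A \end{bmatrix}$ and then applying the first equivalence to this reshuffled matrix. The main obstacle is conceptual rather than computational: one must be vigilant that the factorization requires precisely $A A^+ B = B$, which is why the range condition appears in the statement and cannot be dropped in the singular case, and that the symmetry of $A$ is what makes $A^+ A = A A^+$, so that the two natural formulations of the projector onto $\operatorname{range}(A)$ coincide. This is exactly the subtlety that distinguishes the pseudoinverse version from the more familiar Schur complement identity with invertible $A$.
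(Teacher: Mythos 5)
Your proof is correct, and there is nothing in the paper to compare it against: Lemma \ref{lem_2} is stated in the appendix as a quoted result from \cite{Boyd-ElGhaoui-Feron-Balakrishnan94} with no proof given, and is used only as a tool (e.g.\ in the proofs of Lemma \ref{the_1} and Proposition \ref{cor_2}). Your argument is the standard derivation of the generalized (pseudoinverse) Schur complement criterion, essentially Albert's theorem: extracting $A\succeq 0$ from a principal submatrix, forcing $\ker(A)\subseteq\ker(B^T)$ with the test vectors $(tx,\,y)^T$ and letting $t\to\pm\infty$, and then reducing everything to the unipotent congruence $M=L\,\mathrm{diag}\bigl(A,\;C-B^TA^+B\bigr)\,L^T$, whose validity you correctly tie to $AA^+B=B$ and to the fact that symmetry of $A$ gives $(A^+)^T=A^+$ and $A^+A=AA^+$. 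All the singular-case subtleties that distinguish this statement from the invertible-block version are handled, and the permutation argument cleanly yields the second equivalence.
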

\begin{proof}[Proof of Lemma \ref{the_1}]:
The Lagrangian of (\ref{Eqpre_7})-(\ref{Eqpre_8}) is
 \begin{eqnarray}
\nonumber &&L(x,X,\lambda_1,\ldots,\lambda_m,G)\\
\nonumber&&=tr\Bigg(\Big(P_0^{-1}-\sum_{i=1}^m\lambda_iP_i^{-1}+G\Big)X\Bigg)-x^TG x\\
\nonumber&&-2x_0^TP_0^{-1}x+2\sum_{i=1}^m \lambda_ix_i^TP_i^{-1}x\\
\label{Eqpre_12}&&+x_0^TP_0^{-1}x_0-1-\sum_{i=1}^m\lambda_i(x_i^TP_i^{-1}x_i-1).
\end{eqnarray}
Then, the dual function is
 \begin{eqnarray}\label{Eqpre_13}
&&g(\lambda_1,\ldots,\lambda_m,G)=\max_{x,X}L(x,X,\lambda_1,\ldots,\lambda_m,G)\\
\nonumber&&= \Big(P_0^{-1}x_0-\sum_{i=1}^m\lambda_iP_i^{-1}x_i\Big)^T\Big(\sum_{i=1}^m\lambda_iP_i^{-1}-P_0^{-1}\Big)^{+}\\
\nonumber&&\cdot\Big(P_0^{-1}x_0-\sum_{i=1}^m\lambda_iP_i^{-1}x_i\Big)+x_0^TP_0^{-1}x_0\\
\nonumber &&-1-\sum_{i=1}^m\lambda_i(x_i^TP_i^{-1}x_i-1),
 \end{eqnarray}
 and Lagrange multipliers $\lambda_1,\ldots,\lambda_m$ need to satisfy the following constrains
  \begin{eqnarray}
 \nonumber && \lambda_i\geq0,i=1,\ldots,m\\
  \nonumber &&\sum_{i=1}^m\lambda_iP_i^{-1}-P_0^{-1}\succeq0.
   \end{eqnarray}
From (\ref{Eqpre_4}), (\ref{Eqpre_12})-(\ref{Eqpre_13}), we can obtain the following inequality
\begin{eqnarray}
\label{Eqpre_14}\varphi(x_0,P_0)\leq g(\lambda_1,\ldots,\lambda_m,G).
\end{eqnarray}
If $g(\lambda_1,\ldots,\lambda_m,G)\leq 0$, then $\varphi(x_0,P_0)\leq 0$, in other words, the feasible set in (\ref{Eqpre_2}) can be relaxed to the following forms based (\ref{Eqpre_13}),
\begin{eqnarray}
\nonumber &&\Big(P_0^{-1}x_0-\sum_{i=1}^m\lambda_iP_i^{-1}x_i\Big)^T\Big(\sum_{i=1}^m\lambda_iP_i^{-1}-P_0^{-1}\Big)^{+}\\
\nonumber&&\cdot\Big(P_0^{-1}x_0-\sum_{i=1}^m\lambda_iP_i^{-1}x_i\Big)+x_0^TP_0^{-1}x_0\\
\label{Eqpre_15}&&-1-\sum_{i=1}^m\lambda_i(x_i^TP_i^{-1}x_i-1)\leq0,\\
\label{Eqpre_16}&&\sum_{i=1}^m\lambda_iP_i^{-1}-P_0^{-1}\succeq0,\\
\label{Eqpre_17}&&\lambda_i\geq0,i=1,\ldots,m.
\end{eqnarray}
Using a Schur complement, we can express the (\ref{Eqpre_15})-(\ref{Eqpre_16}) as
\begin{eqnarray}
\nonumber\left[
  \begin{array}{cc}
    P_0^{-1}-\sum_{i=1}^m\lambda_iP_i^{-1} & -P_0^{-1}x_0+\sum_{i=1}^m\lambda_iP_i^{-1}x_i \\
    (-P_0^{-1}x_0+\sum_{i=1}^m\lambda_iP_i^{-1}x_i)^T & x_0^TP_0^{-1}x_0-1-\sum_{i=1}^m\lambda_i(x_i^TP_i^{-1}x_i-1) \\
  \end{array}
\right]\preceq0
\end{eqnarray}
or, replacing the variable $\tilde{x}_0$ by $\tilde{x}_0=P_0^{-1}x_0$,
\begin{eqnarray}
\label{Eqpre_30}\left[
                  \begin{array}{ccc}
                    P_0^{-1}-\sum_{i=1}^m\lambda_iP_i^{-1} & -\tilde{x}_0+\sum_{i=1}^m\lambda_iP_i^{-1}x_i & 0 \\
                    (-\tilde{x}_0+\sum_{i=1}^m\lambda_iP_i^{-1}x_i)^T & -1-\sum_{i=1}^m\lambda_i(x_i^TP_i^{-1}x_i-1)& \tilde{x}_0^T \\
                    0 & \tilde{x}_0 & -P_0^{-1} \\
                  \end{array}
                \right]\preceq0,
\end{eqnarray}
Combine (\ref{Eqpre_17}) and (\ref{Eqpre_30}), we can obtain the optimization problem (\ref{Eqpre_28})-(\ref{Eqpre_29}) in Lemma \ref{the_1}.
\end{proof}
%




%

\end{document}